\newtheorem{prop}{Proposition}[section]
\newtheorem{thm}[prop]{Theorem}
\newtheorem{lem}[prop]{Lemma}
\theoremstyle{definition}
\newtheorem{rem}[prop]{Remark}
\newtheorem*{ack}{Acknowledgements}
\def\co{\colon\thinspace}
\newcommand{\bfa}{\mathbf{a}}
\newcommand{\obfa}{\overline{\mathbf{a}}}
\newcommand{\bfb}{\mathbf{b}}
\newcommand{\bfc}{\mathbf{c}}
\newcommand{\C}{\mathbb{C}}
\newcommand{\CP}{\mathbb{C}\mathrm{P}}
\newcommand{\rmd}{\mathrm{d}}
\newcommand{\rme}{\mathrm{e}}
\newcommand{\HH}{\mathbb{H}}
\newcommand{\rmi}{\mathrm{i}}
\newcommand{\bfi}{\mathbf{i}}
\newcommand{\bfj}{\mathbf{j}}
\newcommand{\bfk}{\mathbf{k}}
\newcommand{\bfp}{\mathbf{p}}
\newcommand{\R}{\mathbb{R}}
\newcommand{\RP}{\mathbb{R}\mathrm{P}}
\newcommand{\SO}{\mathrm{SO}}
\newcommand{\bft}{\mathbf{t}}
\newcommand{\bfu}{\mathbf{u}}
\newcommand{\obfu}{\overline{\mathbf{u}}}
\newcommand{\bfv}{\mathbf{v}}
\newcommand{\bfx}{\mathbf{x}}
\newcommand{\tX}{\tilde{X}}
\newcommand{\Xh}{X_{\mathrm{h}}}
\newcommand{\bfy}{\mathbf{y}}
\newcommand{\Z}{\mathbb{Z}}
\DeclareMathOperator{\Real}{Re}
\begin{document}

\author[P.~Albers]{Peter Albers}
\address{Mathematisches Institut, Universit\"at Heidelberg,
Im Neuenheimer Feld 205, 69120 Heidelberg}
\email{palbers@mathi.uni-heidelberg.de}

\author[H.~Geiges]{Hansj\"org Geiges}
\address{Mathematisches Institut, Universit\"at zu K\"oln,
Weyertal 86--90, 50931 K\"oln, Germany}
\email{geiges@math.uni-koeln.de}

\author[K.~Zehmisch]{Kai Zehmisch}
\address{Mathematisches Institut, WWU M\"unster, Einsteinstra{\ss}e 62,
48149 M\"unster, Germany}
\email{kai.zehmisch@uni-muenster.de}

\thanks{This research is part of a project
in the SFB/TRR 191 `Symplectic Structures in Geometry, Algebra and Dynamics',
funded by the DFG}

\title[Reeb dynamics inspired by Katok's example]{Reeb dynamics inspired
by Katok's example in Finsler geometry}

\date{}

\begin{abstract}
Inspired by Katok's examples of Finsler metrics with a small
number of closed geodesics, we present two results
on Reeb flows with finitely many periodic orbits.
The first result is concerned with a contact-geometric 
description of magnetic flows on the $2$-sphere found recently by
Benedetti. We give a simple interpretation of that
work in terms of a quaternionic symmetry. In the second part,
we use Hamiltonian circle actions on symplectic manifolds
to produce compact, connected contact manifolds in dimension at least five
with arbitrarily large numbers of periodic Reeb orbits.
This contrasts sharply
with recent work by Cristofaro-Gardiner, Hutchings and
Pomerleano on Reeb flows in dimension three.
With the help of Hamiltonian plugs and
a surgery construction due to Laudenbach we reprove a result of
Cieliebak: one can produce Hamiltonian flows in dimension at least five
with any number of periodic orbits; in dimension three, with any
number greater than one.
\end{abstract}

\subjclass[2010]{37J45; 37J55, 53D25, 53D35}

\maketitle


\section{Introduction}
In a much-cited paper, Katok~\cite{kato73} constructed
non-reversible Finsler metrics on spheres and projective spaces
with small numbers of closed geodesics (and an ergodic geodesic flow)
by deforming a Riemannian metric all
of whose geodesics are closed. The geometry of these examples
has been analysed in great detail by Ziller~\cite{zill82}.
For the Katok metrics on the $2$-sphere, a contact-geometric interpretation
of the geodesic flow on the unit tangent bundle (or its double
cover, the $3$-sphere) as a Reeb flow was given by Harris and
Paternain~\cite{hapa08}.

Geodesic (or rather cogeodesic) flows can be interpreted as Hamiltonian
flows with respect to the canonical symplectic form on the cotangent bundle
and the Hamiltonian function given by the square of the fibre norm.
The Katok deformation of the Riemannian metric into
a Finsler metric translates into a deformation of the Hamiltonian
function.

Magnetic flows are generalisations of these geodesic flows;
here the canonical symplectic form is modified by adding a closed
$2$-form lifted from the base of the cotangent bundle.
Benedetti~\cite{bene16} recently extended the work of
Harris--Paternain and gave an interpretation of magnetic flows
on the $2$-sphere (including their deformations into flows with
only two periodic orbits) in terms of Reeb flows.

Katok takes a Riemannian manifold whose geodesic flow is periodic
and alters the flow with the help of an $S^1$-action on that manifold.
For instance, one starts with the geodesic flow of the round metric
on the $2$-sphere $S^2$, and modifies it with the help of
the rotation of $S^2$ about a fixed axis into a non-reversible
Finsler metric of Randers type. Only the equator (traversed in
either direction) survives as a closed geodesic.

In the present paper we discuss two constructions inspired by these
examples. In Section~\ref{section:quaternionic} we show that
Benedettis's work admits a strikingly simple interpretation in
terms of a quaternionic symmetry of the $3$-sphere. It is well
known that the standard Reeb flow on the $3$-sphere, which defines
the Hopf fibration, can be perturbed into a Reeb flow with only
two periodic orbits, seen for instance as the Reeb flow
on an irrational ellipsoid in $\R^4$. We show that Benedetti's
magnetic flows are nothing but quaternionic rotates
of this example. The detailed formulation of the result is
given in Theorem~\ref{thm:quaternionic}. Here is a pr\'ecis.

Magnetic flows are Hamiltonian flows in the cotangent bundle of a
manifold with respect to a symplectic form obtained by adding a closed
$2$-form lifted from the base to the canonical symplectic form.
As Hamiltonian function we take the fibre norm squared, and we
study the dynamics on the unit cotangent bundle. When the base manifold
is the $2$-sphere~$S^2$, it is convenient to pass to the double cover $S^3$
of the unit cotangent bundle.

Now, on $S^3$ there is a natural $S^2$-family of contact forms induced from
a $2$-sphere of complex structures on $\R^4$, regarded as the
space of quaternions. Writing $\bfi,\bfj,\bfk$ for the standard
quaternionic units and $r$ for the radial
coordinate on~$\R^4$, we set $\alpha_{\bfi}=-2\,\rmd r\circ\bfi|_{TS^3}$;
the contact forms $\alpha_{\bfj}$ and $\alpha_{\bfk}$ are defined
analogously. The triple $(\alpha_{\bfi},\alpha_{\bfj},\alpha_{\bfk})$
is a taut contact sphere in the sense of~\cite{gego95}.

We show that magnetic flows on (the unit cotangent bundle of) $S^2$, for
the magnetic term being a multiple of the standard area form,
are simply the Reeb flows in the circular family
$\alpha^{\theta}:=\cos\theta\,\alpha_{\bfi}+\sin\theta\,\alpha_{\bfj}$
of contact forms. All these Reeb flows are periodic.
The Reeb flow of $\alpha_{\bfi}$ corresponds
to an infinite magnetic term and defines the Hopf
fibration, as the double cover of the unit cotangent bundle of~$S^2$.
The Reeb flow of $\alpha_{\bfj}$ corresponds to a zero magnetic term
and constitutes the geodesic flow. The deformation of the Hopf fibration
into a flow with only two periodic orbits is then
effected simultaneously on the whole circular family of Reeb flows.

The second part of the paper is concerned with the
construction of Reeb and Hamiltonian flows with large but finite
numbers of periodic orbits.
In Section~\ref{section:finite} we build on the idea of
modifying a given Reeb flow on a bundle with the help
of an $S^1$-action on the base manifold. Specifically, we
start with the Reeb flow along the fibres of an $S^1$-bundle
over a symplectic manifold coming from the classical
Boothby--Wang construction~\cite{bowa58}, cf.~\cite[Section~7.2]{geig08}.
We then modify this periodic flow with the help of a
Hamiltonian $S^1$-action on the base.
In dimension five, for instance, this allows one to realise
any integer $\geq 3$ as the number of periodic Reeb orbits on
some closed, connected contact manifold.
This contrasts sharply with recent results of Cristofaro-Gardiner,
Hutchings and Pomerleano~\cite{chp17}. In essence they
established that, in dimension three, a contact manifold
carries either two or infinitely many periodic Reeb orbits.

Using in addition a surgery construction for Hamiltonian flows
due to Laudenbach~\cite{laud97} and Hamiltonian plugs~\cite{grz16}, we can
give a new proof of a result by Cieliebak~\cite{ciel97}:
Any non-negative integer (in dimension three:
any integer $\geq 2$) can be realised as the number of
closed characteristics on some hypersurface in the standard
symplectic $\R^{2n+2}$, i.e.\ as periodic orbits of a Hamiltonian flow
(Theorem~\ref{thm:finite}).

Sections~\ref{section:quaternionic} and~\ref{section:finite}
are to a large part independent of each other.
\section{A quaternionic view of magnetic flows on the $2$-sphere}
\label{section:quaternionic}
On the cotangent bundle $T^*S^2$ of the $2$-sphere $S^2$ we consider
the canonical Liouville form $\lambda$, defined by
$\lambda_{\bfv^*}=\bfv^*\circ T\pi$ at $\bfv^*\in T^*S^2$,
where $\pi\co T^*S^2\rightarrow S^2$ denotes the bundle projection.
Given any (closed) $2$-form $\sigma$ on $S^2$,
which is referred to as the \emph{magnetic field},
we can define a symplectic form
\[ \omega_{\sigma}:=\rmd\lambda-\pi^*\sigma\]
on $T^*S^2$. The Hamiltonian vector field $X_H$ corresponding to
a smooth function $H\co T^*S^2\rightarrow\R$ is then defined by
\[ \omega_{\sigma}(X_H,\,.\,)=-\rmd H.\]
If $g$ is a Riemannian metric on $S^2$ and $\|\,.\,\|_*$ the dual norm on
$T^*S^2$, the flow of the Hamiltonian vector field defined by the
Hamiltonian function $H(\bfv^*)=\frac{1}{2}\|\bfv^*\|_*^2$ is
called the \emph{magnetic flow} of the pair $(g,\sigma)$.

\begin{rem}
\label{rem:geodesic}
The Hamiltonian flow for the symplectic form $\rmd\lambda$
is precisely the (co-)geodesic flow of~$g$. In
\cite[Theorem~1.5.2]{geig08} this is verified for the unit cotangent bundle
$H^{-1}(1/2)$ of any Riemannian manifold. Since $H$ scales quadratically
in the fibres of the cotangent bundle, the Hamiltonian vector field scales
linearly in the fibres, as it should.
\end{rem}

Now let $\sigma_0$ be the standard symplectic form on $S^2$
of total area~$4\pi$, corresponding to the round metric $g_0$
on $S^2$ of constant Gau{\ss} curvature~$1$. We want to study
the magnetic flow of the pairs $(g_0,s\sigma_0)$ for $s\in[0,\infty)$.
For $s=0$ we have the geodesic flow; the projected flow lines on $S^2$ are the
great circles. As the strength of the magnetic field
increases, the projected flow lines on $S^2$ acquire an
increasing left drift, causing the great circles for $s=0$ to
become circles on $S^2$ of smaller and smaller radius. For an
`infinite' magnetic field, these circles become points, and the
magnetic flow is along the fibres of $T^*S^2$.

In order to include this case, we consider the circular family
of closed $2$-forms
\[ \sin\theta\,\rmd\lambda+\cos\theta\, \pi^*\sigma_0,\;\;
\theta\in\R/2\pi\Z.\]
For $\theta\equiv 0$ mod~$\pi$, this form is not symplectic.
There is, however, a simple way to extend $\pi^*\sigma_0$
to a symplectic form on $T^*S^2\setminus S^2$
(the complement of the zero section in $T^*S^2$) whose restriction
to the tangent bundle of $ST^*S^2$ coincides with that of $\pi^*\sigma_0$.
This is done as follows.

On $T^*S^2$ we have the canonical Liouville
vector field $Y=Y_{T^*S^2}$ defined by $\rmd\lambda(Y,\,.\,)=\lambda$.
In local coordinates $q_1,q_2$ on $S^2$ and dual coordinates $p_1,p_2$
on $T^*S^2$ we have $\lambda=p_1\,\rmd q_1+p_2\,\rmd q_2$, and hence
$Y=p_1\partial_{p_1}+p_2\partial_{p_2}$ is the fibrewise
radial vector field. We now identify
$\R^+\times ST^*S^2$, where we write $\rho$ for the $\R^+$-coordinate,
with $T^*S^2\setminus S^2$
by sending $\{1\}\times ST^*S^2$ identically to $ST^*S^2$,
and the flow lines of $\rho\partial_{\rho}$ to the flow lines of~$Y$.
Under this identification, the symplectic form $\rmd\lambda$
on $T^*S^2$ pulls back to the symplectisation $\rmd(\rho\lambda_1)$
of the contact form $\lambda_1$ on $ST^*S^2$ given by the
restriction of~$\lambda$.

On the unit cotangent bundle $ST^*S^2$ we have a further contact form,
namely, the connection $1$-form $\alpha$, which satisfies
$\rmd\alpha=\pi^*\sigma_0$;
notice that the Euler class of the bundle is given by
$-[\sigma_0/2\pi]=-2$. We then obtain a circular family of
symplectic forms on $ST^*S^2\setminus S^2\equiv\R^+\times ST^*S^2$ by setting
\[ \omega^{\theta}:=\sin\theta\,\rmd\lambda+\cos\theta\,\rmd(\rho\alpha),
\;\; \theta\in\R/2\pi\Z.\] 

\begin{rem}
\label{rem:Ham-Reeb}
The Hamiltonian $H(\bfv^*)=\frac{1}{2}\|\bfv^*\|_*^2$ on $T^*S^2$
corresponds to the Hamiltonian $\rho^2/2$ on $\R^+\times ST^*S^2$,
for both functions take the value $1/2$ on $ST^*S^2$ and are
homogeneous of degree~$2$ with respect to the flow of $Y\equiv
\rho\partial_{\rho}$. It follows that the Hamiltonian
flow of $\omega^{\theta}$ on $H^{-1}(1/2)=\{\rho=1\}$
coincides with the Reeb flow of the contact form $\sin\theta\,
\lambda_1+\cos\theta\,\alpha$.
\end{rem}

The Hamiltonian function $H$ is fixed throughout our discussion,
so from now on we shall usually speak of `the Hamiltonian flow
of $\omega^{\theta}$' whenever the Hamiltonian $H$ is to be understood.

For $\theta\not\equiv 0$ mod~$\pi$, the Hamiltonian flow of
$\omega^{\theta}$ on the unit cotangent bundle $ST^*S^2=H^{-1}(1/2)$
equals the magnetic flow of $(g_0,-\cot\theta\,\sigma_0)$,
up to constant time reparametrisation; in particular,
for $\theta\equiv\pi/2$ mod~$\pi$, the Hamiltonian flow
is the geodesic flow of~$g_0$. For $\theta\equiv 0$ mod~$\pi$,
the flow rotates the fibres of $ST^*S^2$.

There is a natural double covering of $ST^*S^2$
by the $3$-sphere $S^3$ (Section~\ref{subsection:double}),
and a left-action of $S^3$ on itself coming from the
multiplication of unit quaternions. The $S^1$-action mentioned
in the following main theorem of this section comes from
choosing a circle of unit quaternions (see Section~\ref{subsection:q-action}).
The contact forms $\alpha_{\bfi},\alpha_{\bfj}$ in the theorem are
the ones defined in the introduction.

\begin{thm}
\label{thm:quaternionic}
There are contact forms $\alpha_{\bfi},\alpha_{\bfj}$
on $S^3$ and an $S^1$-action on
$S^3$ sending $\alpha_{\bfi}$ to contact forms
$\alpha^{\theta}=\cos\theta\,\alpha_{\bfi}
+\sin\theta\,\alpha_{\bfj}$, $\theta\in\R/2\pi\Z$,
such that the Reeb flow of $\alpha^{\theta}$, which has all orbits
periodic of the same period~$4\pi$, doubly covers the Hamiltonian flow
of $\omega^{\theta}$ on $ST^*S^2$. Moreover, there is a deformation
$\alpha_{\bfi,\varepsilon}$, $\varepsilon\in[0,1)$, through contact
forms, mapped to a family of contact forms $\alpha^{\theta}_{\varepsilon}$
under the $S^1$-action, with the following properties:
\begin{itemize}
\item[(i)] The Reeb flow of $\alpha^{\theta}_{\varepsilon}$ doubly
covers the Hamiltonian flow of $\omega^{\theta}$
on a level set of a suitably perturbed Hamiltonian
function~$H^{\theta}_{\varepsilon}$.
\item[(ii)] For any irrational value of the parameter $\varepsilon$,
this Hamiltonian flow has precisely two periodic orbits.
\item[(iii)] For $\theta\equiv\pi/2$ $\mbox{\rm mod}$~$\pi$
and $\varepsilon$ sufficiently small,
the Hamiltonian flow of $H^{\theta}_{\varepsilon}$ is the geodesic flow of a
Finsler metric on~$S^2$.
\end{itemize}
\end{thm}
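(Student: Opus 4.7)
The plan is to set up everything on the double cover $S^3\subset\HH$, where quaternionic multiplication makes both the circular family of contact forms and the $S^1$-action transparent. Let $\bfi,\bfj,\bfk$ act on $\HH$ by left multiplication; the contact forms $\alpha_{\bfi},\alpha_{\bfj},\alpha_{\bfk}$ of the introduction then form the standard taut contact sphere of~\cite{gego95}, with Reeb vector field $R_{\bfi}|_p=\tfrac{1}{2}\bfi p$ (and likewise for $\bfj,\bfk$), each generating a $4\pi$-periodic flow. Define the $S^1$-action on $S^3$ by left multiplication with $q_{\theta}:=e^{-\theta\bfk/2}$. A direct computation using skew-adjointness of left multiplication by imaginary quaternions yields $L_{q_{\theta}}^{*}\alpha_{\bfi}=\alpha_{q_{\theta}^{-1}\bfi q_{\theta}}$, and the quaternionic identity $q_{\theta}^{-1}\bfi q_{\theta}=\cos\theta\,\bfi+\sin\theta\,\bfj$ gives $L_{q_{\theta}}^{*}\alpha_{\bfi}=\alpha^{\theta}$. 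Hence each $\alpha^{\theta}$ is contact and its Reeb flow is the $q_{\theta}$-conjugate of that of $\alpha_{\bfi}$, still $4\pi$-periodic.

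To match these flows with the magnetic flows of $\omega^{\theta}$, use the classical double cover $S^3\to\SO(3)\cong ST^*S^2$ given by $\bfv\mapsto q\bfv q^{-1}$ on imaginary quaternions. Two identifications need to be verified on the extremes: first, that $\alpha_{\bfi}$ descends to (a multiple of) the connection $1$-form $\alpha$ on the bundle $ST^*S^2\to S^2$, so that its Reeb flow downstairs is fibre rotation, i.e.\ the Hamiltonian flow of $\omega^{0}=\rmd(\rho\alpha)$; and second, that $\alpha_{\bfj}$ descends to the Liouville form $\lambda_1$, so that its Reeb flow downstairs is the geodesic flow of $g_0$, which by Remark~\ref{rem:geodesic} is the Hamiltonian flow of $\omega^{\pi/2}=\rmd\lambda$. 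Both are standard computations; the factor $-2$ built into $\alpha_{\bfi}$ is precisely what matches the Euler class $-2$ of $ST^*S^2\to S^2$. With the endpoints matched, linearity in $(\cos\theta,\sin\theta)$ shows that the Reeb flow of $\alpha^{\theta}$ doubly covers the Reeb flow of $\sin\theta\,\lambda_1+\cos\theta\,\alpha$, which by Remark~\ref{rem:Ham-Reeb} is the Hamiltonian flow of $\omega^{\theta}$.

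For the deformation, let $\alpha_{\bfi,\varepsilon}$ be the contact form induced on $S^3$ by pulling back the standard Liouville form from the ellipsoid $E_{\varepsilon}:=\{|z_1|^{2}+(1+\varepsilon)^{2}|z_2|^{2}=1\}\subset\C^{2}$, where $\C^{2}$ carries the $\bfi$-complex structure, via the obvious radial diffeomorphism $S^3\to E_{\varepsilon}$. This is a smooth family through contact forms with $\alpha_{\bfi,0}=\alpha_{\bfi}$, and for irrational $\varepsilon$ the Reeb flow has precisely the two closed orbits along the coordinate axes of $E_{\varepsilon}$. Set $\alpha^{\theta}_{\varepsilon}:=L_{q_{\theta}}^{*}\alpha_{\bfi,\varepsilon}$; its Reeb flow is the $q_{\theta}$-conjugate of that of $\alpha_{\bfi,\varepsilon}$, establishing~(ii). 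For~(i), the descent of $\alpha^{\theta}_{\varepsilon}$ to $ST^*S^2$ sits as a radial graph $\{\rho=\rho^{\theta}_{\varepsilon}(x)\}\subset\R^{+}\times ST^*S^2$ on which $\omega^{\theta}$ restricts to the exterior derivative of that descended form; the function $\rho^{\theta}_{\varepsilon}$ extends uniquely to a fibrewise positively $2$-homogeneous Hamiltonian $H^{\theta}_{\varepsilon}$ on $T^*S^2\setminus S^2$ with $\{H^{\theta}_{\varepsilon}=1/2\}$ equal to this graph, and Remark~\ref{rem:Ham-Reeb} then identifies its Hamiltonian flow with the Reeb flow upstairs (doubly covered).

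The main obstacle lies in part~(iii): verifying that for $\theta\equiv\pi/2$ mod $\pi$ and small $\varepsilon$ the Hamiltonian $H^{\pi/2}_{\varepsilon}$ is the co-energy of a genuine Finsler metric on $S^{2}$. At $\varepsilon=0$ it is the co-energy of the round metric~$g_{0}$, whose unit level set is strictly convex and fibrewise symmetric. Positive $2$-homogeneity of $H^{\pi/2}_{\varepsilon}$ in the fibres is automatic from the symplectisation construction together with the Liouville vector field~$Y$; strict fibrewise convexity of the level set persists under $C^{2}$-small perturbations and therefore holds for $\varepsilon$ sufficiently small. The polar dual of the resulting co-norm is then a (not necessarily reversible) Finsler metric whose cogeodesic flow is $H^{\pi/2}_{\varepsilon}$, reproducing precisely the Randers-type case of Katok's construction from the quaternionic perspective.
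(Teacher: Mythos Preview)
Your overall strategy coincides with the paper's: realise the $S^1$-family of contact forms via left multiplication by $e^{\theta\bfk/2}$, identify them with the magnetic forms through the double cover $S^3\to ST^*S^2$ coming from quaternionic conjugation, and deform via an irrational ellipsoid. The arguments for (i) and (iii) are more abstract than the paper's explicit computation of the transformed Hamiltonian $K_\varepsilon=H_\varepsilon\circ l_{\obfa}$ and its degree of fibrewise homogeneity, but they are sound in outline.

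There is, however, a genuine error in your treatment of~(ii). With the ellipsoid $E_\varepsilon = \{|z_1|^2 + (1+\varepsilon)^2|z_2|^2 = 1\}$ the two Reeb frequencies are in ratio $(1+\varepsilon)^2$, and this ratio can be rational for irrational~$\varepsilon$: for instance $\varepsilon = \sqrt{2}-1$ gives $(1+\varepsilon)^2 = 2$, and then \emph{every} Reeb orbit is periodic. So the assertion that irrational $\varepsilon$ forces exactly two closed orbits is false for your family. The paper instead takes
\[
\alpha_{\bfi,\varepsilon} = \frac{2}{1+\varepsilon}(x_0\,\rmd y_0 - y_0\,\rmd x_0) + \frac{2}{1-\varepsilon}(x_1\,\rmd y_1 - y_1\,\rmd x_1),
\]
corresponding to the ellipsoid $\{(1+\varepsilon)|z_0|^2 + (1-\varepsilon)|z_1|^2 = 1\}$, whose frequency ratio $(1+\varepsilon)/(1-\varepsilon)$ is a M\"obius transform of $\varepsilon$ and hence irrational exactly when $\varepsilon$ is.

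A smaller point: the identification of $\alpha_{\bfi}$ with the connection form and of $\alpha_{\bfj}$ with the Liouville form under the double cover is where most of the content of the first assertion lives, and it depends on the precise choice of cover. The paper fixes $\Phi(\bfu) = (\obfu\bfi\bfu, \obfu\bfk\bfu)$ and verifies $\Phi^*(\alpha,\lambda_1,\lambda_2) = (\alpha_{\bfi},\alpha_{\bfj},\alpha_{\bfk})$ by a direct quaternionic calculation. Your cover $v\mapsto qvq^{-1}$ differs from this by inversion on~$S^3$, so at minimum the roles of $\bfj$ and $\bfk$ (or some signs) will shift; you should carry out the check rather than declare it standard.
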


\begin{rem}
There is in fact a whole $S^2$-family of such contact forms,
with the corresponding deformations. For symmetry reasons,
all the dynamical phenomena are present in the smaller $S^1$-family.
\end{rem}

Theorem~\ref{thm:quaternionic} is a variant of~\cite[Theorem~1.3]{bene16}.
The proof there hinges on a subtle construction of
a family of symplectomorphisms
\[ \bigl(\{\|\bfv^*\|_*>s\},\rmd\lambda\bigr)\longrightarrow
\bigl(\{\|\bfv^*\|_*>0\},\omega_{s\sigma_0}\bigr)\]
for $s\in[0,\infty)$. In our proof, by contrast, the variation
in the $s$-parameter (or the $\theta$-parameter in our set-up)
becomes a perfectly straightforward application of a
quaternionic symmetry, and the case $s=\infty$ is included
naturally in the family.

Concerning the perturbation into Hamiltonian flows with only two
periodic orbits, the essential feature of our argument is the
following. Rather than perturbing the Hamiltonian function and
the corresponding level sets, as the formulation of the
theorem suggests, we actually deform the contact forms $\Z_2$-equivariantly
on~$S^3$, so that we always talk about Reeb flows on the fixed
manifold $ST^*S^2$. Only then do we interpret these Reeb flows
as Hamiltonian flows for a deformed Hamiltonian function.
This approach via Reeb flows has the benefit that the quaternionic
symmetry is preserved throughout the deformation, so the fact that the
contact forms in the family $\{\alpha_{\varepsilon}^{\theta}\}_{\theta
\in\R/2\pi\Z}$ share the same Reeb dynamics for any fixed value
of the deformation parameter $\varepsilon$ becomes a tautology.
\subsection{The structure forms on a surface}
Let $\Sigma$ be an oriented surface with a Riemannian metric $g$
and associated complex structure~$J$. Write $\pi\co ST\Sigma
\rightarrow\Sigma$ for the unit tangent bundle of $(\Sigma,g)$.
The Liouville--Cartan forms of $(\Sigma,g)$ are the $1$-forms
$\lambda_1,\lambda_2$ on $ST\Sigma$ defined by 
the following equations, where $\bfv\in ST\Sigma$ and $\bft\in
T_{\bfv}(ST\Sigma)$:
\begin{eqnarray*}
\lambda_1(\bft) & = & g(\bfv,T_{\bfv}\pi(\bft)),\\
\lambda_2(\bft) & = & g(J\bfv,T_{\bfv}\pi(\bft))
                      \;\;=\;\;-g(\bfv,JT_{\bfv}\pi(\bft)).
\end{eqnarray*}
It is well known and easy to check, see~\cite[Section~7.2]{sith67},
that there is a unique $1$-form $\alpha_*$ on $ST\Sigma$ satisfying
the structure equations
\begin{eqnarray*}
\rmd\lambda_1 & = & -\lambda_2\wedge\alpha_*,\\
\rmd\lambda_2 & = & -\alpha_*\wedge\lambda_1.
\end{eqnarray*}
This $1$-form $\alpha_*$ is the connection form on $ST\Sigma$, and
the third structure equation is
\[ \rmd\alpha_*=-(\pi^*K)\,\lambda_1\wedge\lambda_2,\]
where $K$ is the Gau{\ss} curvature of $(\Sigma,g)$.

\begin{rem}
Pick any local orthonormal frame $(\mu_1,\mu_2)$ of $1$-forms on $(\Sigma,g)$.
In a local trivialisation of the unit tangent bundle we may think
of $\lambda_1,\lambda_2$ as
\[ \lambda_1=\cos\theta\,\mu_1+\sin\theta\,\mu_2,\;\;\;\;
\lambda_2=-\sin\theta\,\mu_1+\cos\theta\,\mu_2.\]
The formula above then translates into a quick recipe for computing the
Gau{\ss} curvature $K$ of $(\Sigma,g)$, see~\cite[Section~4.14]{gonz05}.
Define local functions $a_1,a_2$ on $\Sigma$ by
\[ \rmd\mu_1=a_1\, \mu_1\wedge\mu_2,\;\;\;\;
\rmd\mu_2=a_2\,\mu_1\wedge\mu_2.\]
Set $\mu=-a_1\mu_1-a_2\mu_2$. Then $K$ is found via the equation
$\rmd\mu=K\mu_1\wedge\mu_2$.
\end{rem}

We also write $\lambda_1,\lambda_2$ for the
corresponding  $1$-forms on the unit cotangent bundle $ST^*\Sigma$
under the bundle isomorphism $ST\Sigma\rightarrow ST^*\Sigma$
induced by the metric~$g$. The natural orientation on the fibres
of $ST^*\Sigma$ is the one for which this bundle isomorphism
is fibre orientation \emph{reversing}, since dual bundles
have opposite Euler classes. So the connection $1$-form $\alpha^*$
on $ST^*\Sigma$ corresponds to~$-\alpha_*$. This means that
the structure forms $(\lambda_1,\lambda_2,\alpha^*)$ on
$ST^*\Sigma$ satisfy structure equations as on $ST\Sigma$,
but with the minus signs in the three equations removed.

From now on we shall only be working on cotangent bundles,
and we write the connection $1$-form $\alpha^*$ simply as~$\alpha$.

\begin{rem}
The $1$-form $\lambda_1$ on $ST^*\Sigma$ is the restriction
of the canonical Liouville $1$-form $\lambda$ on $T^*\Sigma$.
If $q_1+q_2\bfi$ is a local holomorphic coordinate
on $\Sigma$, and $p_1,p_2$ are the dual coordinates of $q_1,q_2$ on
$T^*\Sigma$, then $\lambda_1=p_1\,\rmd q_1+p_2\,\rmd q_2$ and
$\lambda_2=p_1\,\rmd q_2-p_2\,\rmd q_1$.
\end{rem}
\subsection{The structure forms on $S^2$}
\label{subsection:structureS2}
Let $g_0$ be the round metric on $S^2$ of constant Gau{\ss}
curvature~$1$, that is, the Riemannian metric induced on the unit sphere
$S^2\subset\R^3$ by the standard scalar product $\langle\,.\,,\,.\,\rangle$
on~$\R^3$. We may then think of the unit cotangent
bundle $ST^*S^2$ as the submanifold
\[ ST^*S^2=\bigl\{(\bfx,\bfy)\in\R^6\co|\bfx|=1,\,|\bfy|=1,\,
\langle\bfx,\bfy\rangle=0\bigr\} \]
of $\R^6$, with $(\bfx,\bfy)$
interpreted as the covector $\dot{\bfx}\mapsto\langle\bfy,\dot{\bfx}\rangle$
for $\dot{\bfx}\in T_{\bfx}S^2$.
Notice that with respect to the natural orientation
of~$S^2$, the positive orientation of $ST^*_{\bfx}S^2$
is defined by ordered pairs of covectors of the form
$(\bfx,\bfy)$ and $(\bfx,-\bfx\times\bfy)$.

The pair $(\lambda_1,\lambda_2)$ of Liouville--Cartan forms on $ST^*S^2$ 
in this model is given by
\begin{eqnarray*}
\bigl(\lambda_1\bigr)_{(\bfx,\bfy)}(\dot{\bfx},\dot{\bfy})
 & = & \langle\bfy,\dot{\bfx}\rangle,\\
\bigl(\lambda_2\bigr)_{(\bfx,\bfy)}(\dot{\bfx},\dot{\bfy})
 & = & \langle\bfx\times \bfy,\dot{\bfx}\rangle.
\end{eqnarray*}
With $\alpha$ denoting the connection $1$-form on $ST^*S^2$
we have the structure equations
\begin{eqnarray*}
\rmd\lambda_1 & = & \lambda_2\wedge\alpha,\\
\rmd\lambda_2 & = & \alpha\wedge\lambda_1,\\
\rmd\alpha    & = & \lambda_1\wedge\lambda_2.
\end{eqnarray*}
The last equation shows $\lambda_1\wedge\lambda_2$ to be invariant
under the flow along the fibres of $\pi\co ST^*S^2\rightarrow S^2$,
and from the definition of the $\lambda_i$ we then see
that $\lambda_1\wedge\lambda_2$ is the lift of the area form $\sigma_0$
on $S^2$ corresponding to the metric~$g_0$, that is,
$\rmd\alpha=\pi^*\sigma_0$.
\subsection{Contact forms on $S^3$ induced by the quaternions}
We regard $S^3$ as the unit sphere in the space $\HH\cong\R^4$ of quaternions.
We write $\bfi,\bfj,\bfk$ for the standard quaternionic units,
and we use the same notation for the complex bundle structures
they induce on the tangent bundle of~$\HH$. The
units $\bfi,\bfj,\bfk$ define an identification of 
$\R^3$ with the space of pure imaginary quaternions.
For other aspects of quaternionic notation see
Appendix~\ref{appendix:quaternionic}.

Any element $\bfc=c_1\bfi+c_2\bfj+c_3\bfk\in S^2\subset\R^3$ defines a
complex structure on~$\HH$, that is, $\bfc^2=-\mathrm{id}_{T\HH}$.
This gives rise to a $2$-sphere of contact forms
\[ \alpha_{\bfc}:=-2\,\rmd r\circ\bfc|_{TS^3},\;\; \bfc\in S^2,\]
on $S^3$, where $r$ is the radial function on~$\HH$. The Reeb vector field
of $\alpha_{\bfc}$ is
\[ R_{\bfc}=\frac{1}{2}\bfc\partial_r.\]
For a general discussion of such $2$-spheres of contact forms
see~\cite{gego95,gego09}.
\subsection{The Hopf fibration on $S^3$}
\label{subsection:Hopf}
The $2$-dimensional complex vector space $\C^2$ can be
identified with the space $\HH$ of quaternions via
\[ \C^2\ni(z_0,z_1)\longmapsto z_0+z_1\bfj\in\HH.\]
We consider the Hopf fibration corresponding to this choice
of coordinates, that is,
\[ \C^2\supset S^3\ni(z_0,z_1)\longmapsto [z_0:z_1]\in\CP^1=S^2.\]
The fibre of this Hopf fibration over the point $[z_0:z_1]$
is parametrised by $\rme^{\bfi t/2}(z_0,z_1)$, $t\in\R/4\pi\Z$.
This choice of parametrisation corresponds to regarding
the fibres as the orbits of the Reeb vector field~$R_{\bfi}$.
Notice that with $z_0=x_0+y_0\bfi$, $z_1=x_1+y_1\bfi$ we have
\[ \alpha_{\bfi}=2(x_0\,\rmd y_0-y_0\,\rmd x_0+x_1\,\rmd y_1-y_1\,\rmd x_1)\]
and
\[ R_{\bfi}=\frac{1}{2}(x_0\partial_{y_0}-y_0\partial_{x_0}
+x_1\partial_{y_1}-y_1\partial_{x_1}).\]
From the corresponding expressions for $\alpha_{\bfj}$ and
$\alpha_{\bfk}$ it is easy to check that
\begin{eqnarray*}
\rmd\alpha_{\bfi} & = & \alpha_{\bfj}\wedge\alpha_{\bfk},\\
\rmd\alpha_{\bfj} & = & \alpha_{\bfk}\wedge\alpha_{\bfi},\\
\rmd\alpha_{\bfk} & = & \alpha_{\bfi}\wedge\alpha_{\bfk}.
\end{eqnarray*}
In other words, the triple $(\alpha_{\bfi},\alpha_{\bfj},\alpha_{\bfk})$
of $1$-forms on $S^3$ satisfies the same structure equations
as the triple $(\tilde{\alpha},\tilde{\lambda}_1,\tilde{\lambda}_2)$
obtained by lifting the structure forms for the constant curvature
$1$ metric on $S^2$ from $ST^*S^2$ to~$S^3$.

Here is a quick sketch, cf.~\cite[Section~3]{gego02},
how one may now argue that there
is a diffeomorphism $S^3\rightarrow S^3$ that pulls back
$(\tilde{\alpha},\tilde{\lambda}_1,\tilde{\lambda}_2)$ to
$(\alpha_{\bfi},\alpha_{\bfj},\alpha_{\bfk})$; this diffeomorphism will
be constructed explicitly in the next section.

Regard the $1$-forms $\alpha_{\bfi}-\tilde{\alpha},
\alpha_{\bfj}-\tilde{\lambda}_1,\alpha_{\bfk}-\tilde{\lambda}_2$
in the obvious way
as $1$-forms on $S^3\times S^3$. The structure equations imply
that this triple of $1$-forms generates a differential ideal,
and hence defines a $3$-dimensional foliation on $S^3\times S^3$.
Since either triple $(\tilde{\alpha},\tilde{\lambda}_1,\tilde{\lambda}_2)$
and $(\alpha_{\bfi},\alpha_{\bfj},\alpha_{\bfk})$ defines a coframe on $S^3$,
the leaves of this
foliation are graph-like with respect to either $S^3$-factor.
Because of the compactness of $S^3\times S^3$, the inclusion of a leaf in
$S^3\times S^3$, followed by the projection onto the first factor,
is a covering. Since $S^3$ is simply connected, this covering
must be trivial, so the leaves are indeed graphs of the desired
diffeomorphism.

Since the $1$-forms $\alpha_{\bfc}$ are invariant under the
\emph{right}-action by $S^3$ on itself, see Remark~\ref{rem:right-invariant},
this diffeomorphism is unique up to precomposition with the
right-multiplication by an element of $S^3$, cf.~\cite[Lemma~3.9]{gego02}.

\subsection{The double covering $S^3\rightarrow ST^*S^2$}
\label{subsection:double}
We think of $S^3$ as the unit sphere in $\HH$, and we identify
$\R^3$ with the space of pure imaginary quaternions.
For $\bfu\in S^3$ and $\bfx\in\R^3$, define $f_{\bfu}(\bfx):=\obfu\bfx\bfu$.
Then $f_{\bfu}$ is in fact an element of $\SO(3)$, and the
map $\bfu\mapsto f_{\bfu}$ is a double
covering $S^3\rightarrow\SO(3)$, see~\cite[Theorem~10.9]{geig16}.

Writing the elements of $\SO(3)$ in matrix form,
we can describe the map $\bfu\mapsto f_{\bfu}$ more explicitly by
\[ \bfu\longmapsto
\begin{pmatrix}
|             & |             & |            \\
\obfu\bfi\bfu & \obfu\bfj\bfu & \obfu\bfk\bfu\\
|             & |             & |            
\end{pmatrix}. \]

\begin{rem}
Notice that the last column is determined by the first two. In fact,
for $\bfx,\bfy\in\R^3\subset\HH$ one has
\[ \bfx\bfy-\bfy\bfx=2\bfx\times\bfy,\]
see~\cite[Lemma~10.8]{geig16}. One then checks easily that
$\obfu\bfi\bfu\times\obfu\bfj\bfu=\obfu\bfk\bfu$.
\end{rem}

Any two columns of a special orthogonal matrix
define an element of $ST^*S^2$, with the model given
in Section~\ref{subsection:structureS2},
and this gives a diffeomorphism
from $\SO(3)$ to $ST^*S^2$. The following choice
of double covering $S^3\rightarrow ST^*S^2$ is made so as to be compatible
with the Hopf fibration and such that the structure forms
on $ST^*S^2$ pull back to the triple of contact forms on $S^3$
induced by the quaternions. We first verify the statement about the
Hopf fibration.

\begin{lem}
The covering
\[ \begin{array}{rccc}
\Phi\co & S^3  & \longrightarrow & ST^*S^2\subset\R^6\\
        & \bfu & \longmapsto     & (\obfu\bfi\bfu,\obfu\bfk\bfu).
\end{array} \]
sends the Hopf fibres
of $S^3$ two-to-one onto the fibres of $ST^*S^2$.
\end{lem}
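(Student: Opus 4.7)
The plan is to parametrise a Hopf fibre explicitly using the convention of Section~\ref{subsection:Hopf} and to evaluate the two columns of $\Phi$ on this parametrisation. The Hopf fibre through $\bfu\in S^3$ is $\bfu_t:=\rme^{\bfi t/2}\bfu$ for $t\in\R/4\pi\Z$, so with $\obfu_t=\obfu\,\rme^{-\bfi t/2}$ the task reduces to computing $\obfu_t\bfi\bfu_t$ and $\obfu_t\bfk\bfu_t$ as functions of~$t$.

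For the first column I would simply note that $\bfi$ commutes with every element of the circle $\{\rme^{\bfi t/2}\}$, so $\rme^{-\bfi t/2}\bfi\,\rme^{\bfi t/2}=\bfi$ and hence $\obfu_t\bfi\bfu_t=\obfu\bfi\bfu$ for every $t$. This already shows that $\Phi$ sends the whole Hopf fibre into a single $\pi$-fibre of $ST^*S^2\to S^2$, namely the one over $\obfu\bfi\bfu\in S^2$.

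For the second column I would instead exploit the anti-commutation $\bfi\bfk=-\bfk\bfi$, which gives $\rme^{-\bfi t/2}\bfk=\bfk\,\rme^{\bfi t/2}$ and merges the two exponentials into $\obfu_t\bfk\bfu_t=\obfu\bfk\,\rme^{\bfi t}\bfu$. Expanding $\rme^{\bfi t}=\cos t+\bfi\sin t$ and using $\bfk\bfi=\bfj$ then yields
\[ \obfu_t\bfk\bfu_t=\cos t\cdot\obfu\bfk\bfu+\sin t\cdot\obfu\bfj\bfu.\]
Since $\obfu\bfi\bfu,\obfu\bfj\bfu,\obfu\bfk\bfu$ are the columns of the $\SO(3)$-matrix $f_{\bfu}$, the pair $(\obfu\bfj\bfu,\obfu\bfk\bfu)$ is an orthonormal basis of the orthogonal complement of $\obfu\bfi\bfu$ in $\R^3$, which is precisely the fibre of $ST^*S^2$ over that point. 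Hence the second column traces out the full unit circle in this fibre with period $2\pi$ in $t$, whereas the Hopf fibre has period $4\pi$; the restriction of $\Phi$ to each Hopf fibre is therefore a two-to-one covering of the corresponding fibre in $ST^*S^2$.

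There is no real obstacle: the argument is a direct computation driven by the quaternionic commutation relations. The only point of care is the sign, since the doubling of frequency in the second column — and hence the $2{:}1$ behaviour — arises precisely from $\bfi$ anti-commuting with $\bfk$, whereas the first column is invariant because $\bfi$ commutes with itself.
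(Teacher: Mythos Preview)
Your proof is correct and follows essentially the same approach as the paper: parametrise the Hopf fibre by $\rme^{\bfi t/2}\bfu$, use commutation of $\bfi$ with $\rme^{\bfi t/2}$ to see the first column is constant, and use anti-commutation of $\bfk$ with $\bfi$ to see the second column becomes $\obfu(\bfk\cos t+\bfj\sin t)\bfu$, hence traces the fibre with period $2\pi$ against the Hopf period $4\pi$. The paper additionally notes that this traversal has the positive orientation on the fibres of $ST^*S^2$, but that is a side remark rather than part of the lemma.
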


\begin{proof}
The Hopf fibres are the orbits of the $S^1$-action
$\bfu\mapsto\rme^{\bfi t/2}\bfu$, $t\in\R/4\pi\Z$. Under this action we have
\[ \obfu\bfi\bfu\longmapsto\obfu\rme^{-\bfi t/2}\bfi\rme^{\bfi t/2}\bfu=
\obfu\bfi\bfu,\]
which means that $\Phi$ is fibre-preserving.
On the fibre coordinate of $ST^*S^2$ we have
\[ \obfu\bfk\bfu\longmapsto\obfu\rme^{-\bfi t/2}\bfk\rme^{\bfi t/2}\bfu
=\obfu\bigl(\bfj\sin t+\bfk\cos t\bigr)\bfu.\]
Since $t$ ranges over $\R/4\pi\Z$, each fibre
of $ST^*S^2$ is doubly covered by a Hopf fibre.
Notice that the orientation is positive with respect to the natural
orientation of the fibres of $ST^*S^2$.
\end{proof}

\begin{rem}
We have $f_{\bfv}\circ f_{\bfu}=f_{\bfu\bfv}$. From the
group-theoretic point of view it might be a little more natural
to define the double covering $S^3\rightarrow\SO(3)$ by sending $\bfu$
to the map $\bfx\mapsto\bfu\bfx\obfu$, as was done in~\cite{geig16}.
The preceding lemma explains why, in the present context,
our definition of $f_{\bfu}$ is the preferred one.
\end{rem}

Observe that $\Phi(\bfu)=\Phi(-\bfu)$, so $\Phi$ induces a diffeomorphism
from $\RP^3$, the quotient of $S^3$ under the antipodal map, to
$ST^*S^2$.

\begin{rem}
\label{rem:cartesian}
This lemma implies that $\Phi$ induces a diffeomorphism
on $S^2$ (as the base of the Hopf fibration and of the
unit cotangent bundle, respectively).
We want to check that with respect to an appropriate choice of cartesian
coordinates on $\R^3$ this map is the identity. We have
\begin{eqnarray*}
\obfu\bfi\bfu
 & =  & (u_0^2+u_1^2-u_2^2-u_3^2)\,\bfi+2(u_1u_2-u_0u_3)\,\bfj\\
 &    & \mbox{}+2(u_0u_2+u_1u_3)\,\bfk\\
 & =: & x_3\bfi-x_2\bfj+x_1\bfk. 
\end{eqnarray*}
For $S^2\subset\R^3_{x_1,x_2,x_3}$ consider the stereographic projection
from the south pole $(0,0,-1)$ onto the equatorial $x_1x_2$-plane,
which we identify with~$\C$. Notice that both the 
chosen permutation of cartesian coordinates and this stereographic
projection are orientation-preserving. As image of the
stereographic projection we obtain
\[ \frac{x_1+x_2\bfi}{1+x_3}=
\frac{(u_0u_2+u_1u_3)+(u_0u_3-u_1u_2)\bfi}{u_0^2+u_1^2}=
\frac{u_2+u_3\bfi}{u_0+u_1\bfi},\]
which equals $z_1/z_0$ under the identification of
$\bfu$ with $z_0+z_1\bfj$.
\end{rem}

The following lemma says that the structure forms pull back as desired.

\begin{lem}
\label{lem:pullback}
Let $\lambda_1,\lambda_2$ be the Liouville--Cartan forms on $ST^*S^2$,
and $\alpha$ the connection $1$-form.
Then $\Phi^*\lambda_1=\alpha_{\bfj}$, $\Phi^*\lambda_2=\alpha_{\bfk}$,
and $\Phi^*\alpha=\alpha_{\bfi}$.
\end{lem}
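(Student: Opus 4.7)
The plan is to compute $\Phi^*\lambda_1$ and $\Phi^*\lambda_2$ directly in quaternionic language, then deduce $\Phi^*\alpha=\alpha_{\bfi}$ from the structure equations rather than by direct computation.

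First I would convert $\alpha_{\bfc}$ into a concrete formula. Since $r^2=\bfu\obfu$ gives $\rmd r|_{\bfu}(\dot{\bfu})=\Real(\obfu\dot{\bfu})$ for $\bfu\in S^3$, the definition $\alpha_{\bfc}=-2\,\rmd r\circ\bfc$ becomes
\[
\alpha_{\bfc}(\dot{\bfu})=-2\Real(\obfu\bfc\dot{\bfu}).
\]
On the target side, at $\Phi(\bfu)=(\bfx,\bfy)=(\obfu\bfi\bfu,\obfu\bfk\bfu)$ the chain rule gives $\dot{\bfx}=\overline{\dot{\bfu}}\bfi\bfu+\obfu\bfi\dot{\bfu}$, and analogously for $\dot{\bfy}$. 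I would also note that for pure imaginary quaternions $\bfp,\bfq\in\R^3\subset\HH$ one has $\langle\bfp,\bfq\rangle=-\Real(\bfp\bfq)$.

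For $\Phi^*\lambda_1$, I expand
\[
\Phi^*\lambda_1(\dot{\bfu})=\langle\obfu\bfk\bfu,\overline{\dot{\bfu}}\bfi\bfu+\obfu\bfi\dot{\bfu}\rangle
=-\Real\bigl(\obfu\bfk\bfu\overline{\dot{\bfu}}\bfi\bfu\bigr)-\Real\bigl(\obfu\bfk\bfu\obfu\bfi\dot{\bfu}\bigr).
\]
The second term collapses via $\bfu\obfu=1$ and $\bfk\bfi=\bfj$ to $-\Real(\obfu\bfj\dot{\bfu})$. For the first term I use cyclicity of $\Real$ together with $\bfi\bfk=-\bfj$ and $\Real(\bfw)=\Real(\bar{\bfw})$ to rewrite it as $-\Real(\obfu\bfj\dot{\bfu})$ as well. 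The sum is $-2\Real(\obfu\bfj\dot{\bfu})=\alpha_{\bfj}(\dot{\bfu})$. The computation of $\Phi^*\lambda_2$ runs in parallel, once one observes that the cross product identity yields $\bfx\times\bfy=\obfu(\bfi\times\bfk)\bfu=-\obfu\bfj\bfu$; the extra sign is exactly what produces $+\alpha_{\bfk}$ rather than $-\alpha_{\bfk}$.

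For $\Phi^*\alpha$ I would avoid any direct computation. Since $(\alpha_{\bfi},\alpha_{\bfj},\alpha_{\bfk})$ is a coframe on $S^3$, I write
\[
\Phi^*\alpha=a\alpha_{\bfi}+b\alpha_{\bfj}+c\alpha_{\bfk}
\]
with $a,b,c\in C^{\infty}(S^3)$. Pulling back the structure equation $\rmd\lambda_1=\lambda_2\wedge\alpha$ and using the already established $\Phi^*\lambda_1=\alpha_{\bfj}$, $\Phi^*\lambda_2=\alpha_{\bfk}$ together with $\rmd\alpha_{\bfj}=\alpha_{\bfk}\wedge\alpha_{\bfi}$ forces $a=1$ and $b=0$; pulling back $\rmd\lambda_2=\alpha\wedge\lambda_1$ similarly forces $c=0$. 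Hence $\Phi^*\alpha=\alpha_{\bfi}$.

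The main obstacle I anticipate is keeping signs straight in the quaternionic manipulations, in particular the sign of $\bfx\times\bfy$ (tied to the orientation convention for $ST_{\bfx}^*S^2$ fixed in Section~\ref{subsection:structureS2}) and the appearances of $\overline{\bfp}=-\bfp$ when passing between the $\R^4$ inner product and $\Real$ of a quaternion product. Once the two Liouville--Cartan forms are handled cleanly, the connection form is essentially free from the structure equations, which is why I prefer that indirect route to computing $\Phi^*\alpha$.
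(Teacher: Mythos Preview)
Your proposal is correct and follows essentially the same route as the paper: a direct quaternionic computation of $\Phi^*\lambda_1$ and $\Phi^*\lambda_2$, followed by an indirect argument for $\Phi^*\alpha$ via the structure equations. The only cosmetic differences are that the paper invokes the tangency relation $\dot{\bfu}\obfu+\bfu\dot{\obfu}=0$ to combine the two terms (whereas you use $\Real(\bfw)=\Real(\overline{\bfw})$ and cyclicity, which amounts to the same thing), and that the paper states the structure-equation argument for $\Phi^*\alpha$ more tersely, simply noting that $\alpha$ is uniquely determined by $\rmd\lambda_1$ and $\rmd\lambda_2$.
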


\begin{proof}
The first two equalities are proved by a quaternionic computation that
can be found in Appendix~\ref{appendix:quaternionic}. The connection form
$\alpha$ is determined by the structural equations for $\rmd\lambda_1$
and $\rmd\lambda_2$; likewise, $\alpha_{\bfi}$ is determined
by $\alpha_{\bfj}$ and~$\alpha_{\bfk}$. This yields the third equality.
\end{proof}
\subsection{The quaternionic action}
\label{subsection:q-action}
The unit quaternions $S^3\subset\HH$ form a group, and we consider the
left-action of $S^3$ on itself. For $\bfa\in S^3$ we write
\[ \begin{array}{rccc}
l_{\bfa}\co & S^3  & \longrightarrow & S^3\\
            & \bfu & \longmapsto     & \bfa\bfu.
\end{array}\]

Since we are ultimately concerned with Reeb orbits rather than contact forms,
we formulate the following lemma in terms of the push-forward of forms.

\begin{lem}
The push-forward of the contact form $\alpha_{\bfc}$ under $l_{\bfa}$ is
the contact form $\alpha_{\bfa\bfc\obfa}$. In particular, we
have $Tl_{\bfa}(R_{\bfc})=R_{\bfa\bfc\obfa}$.
\end{lem}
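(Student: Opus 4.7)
The plan is to convert the push-forward statement into a pullback statement and then verify it directly from the defining formula $\alpha_{\bfc}=-2\,\rmd r\circ\bfc|_{TS^3}$. Since $l_{\bfa}^{-1}=l_{\obfa}$, one has $(l_{\bfa})_*\alpha_{\bfc}=l_{\obfa}^*\alpha_{\bfc}$, so it suffices to show
\[ l_{\obfa}^*\alpha_{\bfc}=\alpha_{\bfa\bfc\obfa}.\]
Observe first that the hypothesis $\bfc\in S^2$ means $\bfc$ is a pure imaginary unit quaternion; since conjugation by the unit quaternion~$\bfa$ is an element of $\SO(3)$ acting on $\R^3\subset\HH$ (cf.~Section~\ref{subsection:double}), the quaternion $\bfa\bfc\obfa$ again lies in $S^2$, so the right-hand side is well defined.

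Next, I would record two elementary identities. At a point $\bfu\in S^3$ with $|\bfu|=1$, the radial derivative is $\rmd r|_{\bfu}(\bfw)=\langle\bfu,\bfw\rangle$, so
\[ \alpha_{\bfc}|_{\bfu}(\bfv)=-2\langle\bfu,\bfc\bfv\rangle\]
for $\bfv\in T_{\bfu}S^3$. Also, since the real scalar product on $\HH$ is $\langle p,q\rangle=\Real(\bar{p}q)$, left-multiplication by $\bfa$ is adjoint to left-multiplication by $\obfa$: $\langle\obfa p,q\rangle=\langle p,\bfa q\rangle$. Now compute, for $\bfu\in S^3$ and $\bfv\in T_{\bfu}S^3$, using that $Tl_{\obfa}$ acts by $\bfv\mapsto\obfa\bfv$:
\begin{eqnarray*}
\bigl(l_{\obfa}^*\alpha_{\bfc}\bigr)_{\bfu}(\bfv)
 & = & \alpha_{\bfc}|_{\obfa\bfu}(\obfa\bfv)
 \;\;=\;\; -2\langle\obfa\bfu,\bfc\obfa\bfv\rangle\\
 & = & -2\langle\bfu,\bfa\bfc\obfa\bfv\rangle
 \;\;=\;\; \alpha_{\bfa\bfc\obfa}|_{\bfu}(\bfv).
\end{eqnarray*}
This establishes the identity on contact forms.

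For the statement about Reeb vector fields, one may invoke the naturality of the Reeb vector field under diffeomorphism: since $(l_{\bfa})_*\alpha_{\bfc}=\alpha_{\bfa\bfc\obfa}$ and $l_{\bfa}$ is a diffeomorphism, it maps the Reeb vector field of $\alpha_{\bfc}$ to that of $\alpha_{\bfa\bfc\obfa}$. Alternatively, I would give the one-line direct check using $R_{\bfc}|_{\bfu}=\frac{1}{2}\bfc\bfu$: the differential of $l_{\bfa}$ sends this to $\frac{1}{2}\bfa\bfc\bfu=\frac{1}{2}(\bfa\bfc\obfa)(\bfa\bfu)=R_{\bfa\bfc\obfa}|_{l_{\bfa}(\bfu)}$. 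There is no real obstacle here; the only point that requires a moment's thought is that $\bfa\bfc\obfa$ lies on~$S^2$, i.e.\ that the parametrisation of the family $\{\alpha_{\bfc}\}_{\bfc\in S^2}$ is preserved by the adjoint action of $\bfa$ on~$\R^3$.
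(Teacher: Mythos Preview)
Your proof is correct and follows essentially the same approach as the paper: both use the identity $\rmd r_{\bfp}(\bfc\bfv)=\langle\bfp,\bfc\bfv\rangle$ together with the fact that left-multiplication by a unit quaternion preserves the inner product, and both verify the Reeb statement by the direct computation $\bfa\cdot\frac{1}{2}\bfc\bfu=\frac{1}{2}(\bfa\bfc\obfa)(\bfa\bfu)$. The only difference is one of emphasis and order---the paper treats the Reeb vector field first (phrased via orbits of the ODE $\dot{\gamma}=\bfc\gamma/2$) and leaves the contact-form computation as a one-line remark invoking $l_{\bfa}\in\SO(4)$, whereas you spell out the contact-form pullback in full and derive the Reeb statement afterwards.
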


\begin{proof}
The statement about the Reeb vector field is best proved directly. The orbits
of $R_{\bfc}$ are described by the differential equation
$\dot{\gamma}=\bfc\gamma/2$. Then $\bfa\gamma$ satisfies the differential
equation
\[ \frac{\rmd}{\rmd t}(\bfa\gamma)=\bfa\dot{\gamma}=
\frac{1}{2}\bfa\bfc\gamma=\frac{1}{2}(\bfa\bfc\obfa)(\bfa\gamma),\]
so $\bfa\gamma$ is an orbit of $R_{\bfa\bfc\obfa}$.

For the transformation of the contact form, observe that for
$\bfp\in S^3$ and $\bfv\in T_{\bfp}S^3$ we have
$\rmd r_{\bfp}(\bfc\bfv)=\langle\bfp,\bfc\bfv\rangle$, and use
$l_{\bfa}\in\SO(4)$.
\end{proof}

\begin{rem}
\label{rem:right-invariant}
From $\rmd r_{\bfp}(\bfc\bfv)=\langle\bfp,\bfc\bfv\rangle$
one also sees that the forms $\alpha_{\bfc}$ are invariant
under the \emph{right}-action of $S^3$ on itself.
\end{rem}

We now take the Reeb flow of $\alpha_{\bfi}$, which defines
the Hopf fibration, as our point of reference, and we want to understand
how this flow transforms under the $S^3$-action.
Notice that the transformation $\bfi\mapsto\bfa\bfi\obfa$ is
the $\SO(3)$-action described in the preceding section.
The $2$-sphere $S^2\subset\R^3$ of complex structures has an
$S^1$-symmetry fixing~$\bfi$, given by conjugation with
$\bfb=\cos(\varphi/2)+\bfi\sin(\varphi/2)$. The left-action by
$\bfb$ is simply a shift along the Hopf fibres.
Thus, to understand the dynamics of
$R_{\bfa\bfi\obfa}$, notably the projection of
this flow to~$S^2$, it suffices to consider
a circular family of unit quaternions $\bfa$ for which
$\bfa\bfi\obfa$ rotates in the $\bfi\bfj$-plane, say. This
is achieved by
\[ \bfa =\cos\frac{\theta}{2}+\bfk\sin\frac{\theta}{2},\]
in which case
\[ \bfa\bfi\obfa=\bfi\cos\theta+\bfj\sin\theta.\]

\begin{prop}
\label{prop:R-rotate}
For $\bfa=\cos(\theta/2)+\sin(\theta/2)\,\bfk$, $\theta\in]0,\pi[$,
every orbit of $R_{\bfa\bfi\obfa}$ on $S^3$ projects under the Hopf fibration
to a doubly covered circle of latitude of angle~$\theta$ (measured
from the pole defined by the corresponding orbit of~$R_{\bfi}$).
\end{prop}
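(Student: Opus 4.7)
The plan is to write the Reeb orbit of $R_{\bfa\bfi\obfa}$ through a point $\bfu_0\in S^3$ explicitly and then push it down to~$S^2$ along the Hopf fibration, where a direct quaternionic identity will exhibit it as a rotation orbit.

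The differential equation $\dot{\gamma}=\bfc\gamma/2$ from Section~\ref{subsection:q-action}, with $\bfc:=\bfa\bfi\obfa=\bfi\cos\theta+\bfj\sin\theta$, integrates to $\gamma(t)=\rme^{\bfc t/2}\bfu_0$. By Lemma~\ref{lem:pullback} and Remark~\ref{rem:cartesian}, the Hopf projection $S^3\to S^2$ is, up to the fixed identification of $\R^3$ with the pure imaginary quaternions, the map $\bfu\mapsto\obfu\bfi\bfu=f_{\bfu}(\bfi)$. Since $\bfc$ is pure imaginary, conjugation gives
\[
f_{\gamma(t)}(\bfi)=\overline{\gamma(t)}\,\bfi\,\gamma(t)=\obfu_0\bigl(\rme^{-\bfc t/2}\bfi\,\rme^{\bfc t/2}\bigr)\bfu_0=f_{\bfu_0}\bigl(f_{\rme^{\bfc t/2}}(\bfi)\bigr),
\]
which reduces the question to the inner curve $t\mapsto f_{\rme^{\bfc t/2}}(\bfi)$ in $S^2\subset\R^3$, post-composed with the isometry $f_{\bfu_0}\in\SO(3)$.

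The next step is to recognise $t\mapsto f_{\rme^{\bfc t/2}}(\bfi)$ as a circle of latitude. Under the double cover $S^3\to\SO(3)$ of Section~\ref{subsection:double}, the quaternion $\rme^{\bfc t/2}$ represents rotation by the angle~$t$ about the axis~$\bfc$, and since $\langle\bfi,\bfc\rangle=\cos\theta$, the orbit of $\bfi$ under this one-parameter rotation group is precisely the circle of latitude at angular distance~$\theta$ from~$\bfc$. The kernel of $S^3\to\SO(3)$ being $\{\pm 1\}$, the Reeb period $t\in[0,4\pi)$ corresponds to two full turns in $\SO(3)$, so this circle is traversed exactly twice. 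Applying the isometry $f_{\bfu_0}$ then yields a doubly covered circle of latitude at angular distance~$\theta$ from the rotated pole~$f_{\bfu_0}(\bfc)$.

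To match this pole with the one coming from the ``corresponding'' $R_{\bfi}$-orbit, I appeal to the pushforward lemma of Section~\ref{subsection:q-action}: $l_{\bfa}$ intertwines $R_{\bfi}$ and $R_{\bfa\bfi\obfa}$, so the $R_{\bfi}$-orbit corresponding to the $R_{\bfa\bfi\obfa}$-orbit through~$\bfu_0$ is the one through~$\obfa\bfu_0$, and its Hopf projection is the single point $f_{\obfa\bfu_0}(\bfi)=\obfu_0\bfa\bfi\obfa\bfu_0=f_{\bfu_0}(\bfc)$, which is exactly the centre of the latitude circle. The main obstacle is essentially bookkeeping: one must track the signs in the covering $S^3\to\SO(3)$ and verify that the $S^3$-period $4\pi$ indeed produces a double cover in $\SO(3)$; the rest is a direct quaternionic computation.
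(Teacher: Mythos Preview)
Your proof is correct and follows a genuinely different route from the paper's. The paper argues by symmetry that it suffices to treat the single orbit $l_{\bfa}(\gamma)$ obtained from the Hopf fibre over $[1:0]$, writes it out in cartesian coordinates, and projects to $\CP^1$ explicitly as $[1:\bfi\rme^{-\bfi t}\tan(\theta/2)]$, reading off the latitude circle via stereographic projection (Figure~\ref{figure:stereographic}). You instead handle an arbitrary orbit through $\bfu_0$ in a coordinate-free way: identifying the Hopf map with $\bfu\mapsto\obfu\bfi\bfu$ (this is really the unnamed lemma preceding Remark~\ref{rem:cartesian} together with that remark, rather than Lemma~\ref{lem:pullback}), you factor the projected curve as the $\SO(3)$-isometry $f_{\bfu_0}$ applied to the rotation orbit $t\mapsto\rme^{-\bfc t/2}\bfi\,\rme^{\bfc t/2}$ of $\bfi$ about the axis~$\bfc$. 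The latitude angle $\theta$ and the double covering then fall out immediately from $\langle\bfi,\bfc\rangle=\cos\theta$ and the $4\pi$-periodicity, with no coordinate computation needed. Your approach makes the role of the covering $S^3\to\SO(3)$ more transparent and dispenses with the preliminary symmetry reduction; the paper's approach, in exchange, produces an explicit chart picture and pins down the orientation of traversal. The two identifications of the pole agree: in the paper's special case $\bfu_0=\bfa$, your pole $f_{\bfu_0}(\bfc)=\obfa(\bfa\bfi\obfa)\bfa=\bfi$ is exactly the point corresponding to $[1:0]$.
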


\begin{proof}
For reasons of symmetry, it suffices to study the orbit of
$R_{\bfa\bfi\obfa}$ obtained via the transformation $l_{\bfa}$ from the
orbit $\gamma$ of $R_{\bfi}$ (that is, the Hopf fibre)
over the point $[1:0]\in\CP^1$.
The orbit $\gamma$ is parametrised by
\[ \gamma(t)=(\rme^{\bfi t},0)=\Bigl(\cos\frac{t}{2},\sin\frac{t}{2},
0,0\Bigr),\;\; t\in\R/4\pi\Z.\]
The transformed orbit is
\[ \bfa\gamma(t)=\Bigl(\cos\frac{\theta}{2}\cos\frac{t}{2},
\cos\frac{\theta}{2}\sin\frac{t}{2},
\sin\frac{\theta}{2}\sin\frac{t}{2},
\sin\frac{\theta}{2}\cos\frac{t}{2}\Bigr).\]
This projects to $\CP^1$ as
\[ \Bigl[\cos\frac{\theta}{2}\Bigl(\cos\frac{t}{2}+\bfi\sin\frac{t}{2}\Bigr):
\bfi\sin\frac{\theta}{2}\Bigl(\cos\frac{t}{2}-\bfi\sin\frac{t}{2}\Bigr)\Bigr]=
\Bigl[1:\bfi\rme^{-\bfi t}\tan\frac{\theta}{2}\Bigr].\]
Figure~\ref{figure:stereographic} shows that this describes
a circle of latitude of angle $\theta$ measured from the south pole
$[1:0]$. The circle is doubly covered, because $t$ ranges over $\R/4\pi\Z$.
\end{proof}

\begin{figure}[h]
\labellist
\small\hair 2pt
\pinlabel $[1:0]$ [tl] at 218 35
\pinlabel $[0:1]$ [bl] at 218 400
\pinlabel $1$ [r] at 216 296
\pinlabel $[1:z]$ [l] at 371 115
\pinlabel $\overline{z}$ [bl] at 312 220
\pinlabel $\theta$ at 228 193
\pinlabel $\theta/2$ [l] at 219 310
\pinlabel $\tan(\theta/2)$ [b] at 406 334
\endlabellist
\centering
\includegraphics[scale=.48]{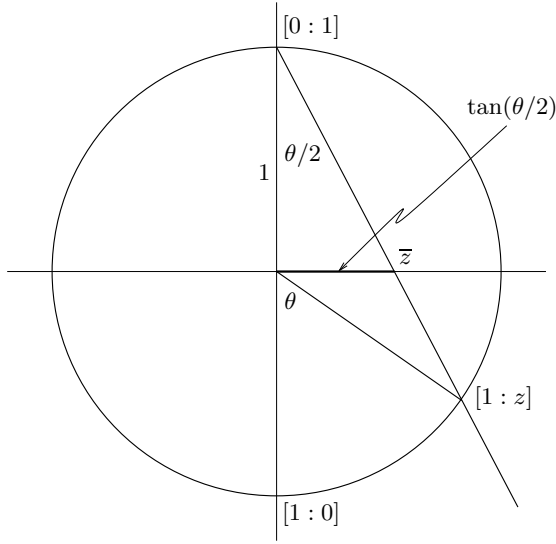}
  \caption{The circle of latitude of angle~$\theta$.}
  \label{figure:stereographic}
\end{figure}

\begin{rem}
For $\theta=\pi$, each Hopf fibre is mapped to the Hopf fibre over the
respective antipodal point in~$S^2$.
\end{rem}
\subsection{Magnetic flows}
\label{subsection:magnetic}
We now want to interpret the flow of the Reeb vector field
$R_{\bfa\bfi\obfa}$ on $S^3$ as the lift of a magnetic flow on
$ST^*S^2$. With $\bfa$ as in Proposition~\ref{prop:R-rotate},
we abbreviate $\alpha_{\bfa\bfi\obfa}$ and $R_{\bfa\bfi\obfa}$
to $\alpha^{\theta}$ and $R^{\theta}$, respectively. With
Lemma~\ref{lem:pullback} we then have
\[ \alpha^{\theta}=\cos\theta\,\alpha_{\bfi}+\sin\theta\,\alpha_{\bfj}=
\Phi^*(\cos\theta\,\alpha+\sin\theta\,\lambda_1).\]
Set
\[ \tilde{\omega}^{\theta}:=
\rmd\alpha^{\theta}=\Phi^*(\cos\theta\,\pi^*\sigma_0+
\sin\theta\,\rmd\lambda_1).\]

\begin{rem}
Notice that $\rmd\lambda_1$ extends naturally as a symplectic form
on the full cotangent bundle as the canonical symplectic form~$\rmd\lambda$.
As discussed in the introduction to this Section~\ref{section:quaternionic},
we may also replace $\pi^*\sigma_0$ by the symplectic $2$-form
$\rmd (\rho\alpha)$ defined on all of $T^*S^2\setminus S^2$. We may
likewise think of $\tilde{\omega}^{\theta}$ as being defined on
$\R^4\setminus\{0\}\cong\R\times S^3$
as the symplectisation of the contact form~$\alpha^{\theta}$.
Thus, we recognise $\tilde{\omega}^{\theta}$ as the double cover of the
symplectic $2$-form used in the definition of magnetic flows.
\end{rem}

By Remark~\ref{rem:Ham-Reeb},
the $S^1$-family of Reeb flows on $S^3$ defined by
the contact forms $\{\alpha^{\theta}\}_{\theta\in\R/2\pi\Z}$
descends to $ST^*S^2=S^3/\Z_2$ to yield an interpolation via
magnetic flows
between the geodesic flow (for a vanishing magnetic term)
and the Hopf flow along the fibres (for an `infinite' magnetic field).
\subsection{Deformation of the magnetic flows}
\label{subsection:deformation}
Recall from Section~\ref{subsection:Hopf}
the description of $\alpha_{\bfi}$ and $R_{\bfi}$
in terms of cartesian coordinates $(x_0,y_0,x_1,y_1)$.
For $\varepsilon\in[0,1)$, we can consider the deformed contact form
\[ \alpha_{\bfi,\varepsilon}:=\frac{2}{1+\varepsilon}
(x_0\,\rmd y_0-y_0\,\rmd x_0)+\frac{2}{1-\varepsilon}
(x_1\,\rmd y_1-y_1\,\rmd x_1) \] 
on $S^3$, with Reeb vector field
\[ R_{\bfi,\varepsilon}=R_{\bfi}+
\frac{\varepsilon}{2}(x_0\partial_{y_0}-y_0\partial_{x_0})-
\frac{\varepsilon}{2}(x_1\partial_{y_1}-y_1\partial_{x_1}).\]
Notice that for irrational values of $\varepsilon$,
with $(1+\varepsilon)/(1-\varepsilon)$ then likewise being irrational, the
only periodic orbits of $R_{\bfi,\varepsilon}$ are
$S^3\cap\{z_1=0\}$ and $S^3\cap\{z_0=0\}$, two circles forming
a Hopf link. In other words, the two periodic orbits of $R_{\bfi}$
that persist after the deformation are the Hopf fibres over
the points $[1:0]$ and $[0:1]$; the speed of the former increases,
the latter is traversed more slowly.

\begin{rem}
The $1$-form $\alpha_{\bfi,\varepsilon}$ on $S^3$ may be
regarded as the contact form induced by
$2\sum_{i=0}^1(x_i\,\rmd y_i-y_i\,\rmd x_i)$ (regarded as a form on $\R^4$)
under the embedding of $S^3$ as an ellipsoid~$E_{\varepsilon}$,
in complex notation,
\[ \begin{array}{ccc}
S^3       & \longrightarrow & \bigl\{ (1+\varepsilon)|z_0|^2+
                              (1-\varepsilon)|z_1|^2=1\bigr\}
                              =:E_{\varepsilon}\\[1.5mm]
(z_0,z_1) & \longmapsto     &  \displaystyle{\Bigl(
                               \frac{z_0}{\sqrt{1+\varepsilon}},
                               \frac{z_1}{\sqrt{1-\varepsilon}}\Bigr)}.
\end{array} \]
\end{rem}

It is useful, as in~\cite{zill82},
to study the flow of $R_{\bfi}$ and the additional summand
in $R_{\bfi,\varepsilon}$ separately. The flow
\[ t\longmapsto \rme^{\bfi t/2}(z_0,z_1)\]
of $R_{\bfi}$ gives us the fibres of the Hopf fibration;
the flow lines of
\[ \frac{1}{2}(x_0\partial_{y_0}-y_0\partial_{x_0})-
\frac{1}{2}(x_1\partial_{y_1}-y_1\partial_{x_1})\] 
are parametrised by
\[ s\longmapsto(\rme^{\bfi s/2}z_0,\rme^{-\bfi s/2}z_1).\]
These two flows commute and define a $T^2$-action on~$S^3$.
The $\R$-action defined by the flow of $R_{\bfi,\varepsilon}$
corresponds to a line in the $2$-torus $T^2=\R^2/(4\pi\Z)^2$ of
slope~$\varepsilon$.

The following geometric observation is not, strictly speaking, relevant
to our discussion, but it is worth noting nonetheless.

\begin{lem}
Let $\beta(s)=(\rme^{\bfi s/2}z_0,\rme^{-\bfi s/2}z_1)$, $s\in\R/4\pi\Z$,
be a flow line of the anti-diagonal $S^1$-action. For any $\bfa\in S^3$,
the rotated flow line $l_{\bfa}(\beta)$ projects under the
Hopf fibration to a circle of latitude on $S^2$ relative to the
poles $[0:1]$ and $[1:0]$.
\end{lem}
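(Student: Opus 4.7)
The plan is to recognise the anti-diagonal $S^1$-action on $S^3\subset\HH$ as right-multiplication by $\rme^{\bfi s/2}$; once this is observed, the lemma will follow almost at once from the commutativity of left and right multiplication on a group.

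First, I would identify $(z_0,z_1)\in S^3\subset\C^2$ with the unit quaternion $\bfu:=z_0+z_1\bfj\in S^3\subset\HH$, as in Section~\ref{subsection:Hopf}. The anticommutation $\bfi\bfj=-\bfj\bfi$ gives $\bfj\,\rme^{\bfi s/2}=\rme^{-\bfi s/2}\bfj$; together with $z_0\rme^{\bfi s/2}=\rme^{\bfi s/2}z_0$ this yields
\[ \bfu\cdot\rme^{\bfi s/2}=\rme^{\bfi s/2}z_0+z_1\rme^{-\bfi s/2}\bfj,\]
which under the identification above corresponds to the pair $(\rme^{\bfi s/2}z_0,\rme^{-\bfi s/2}z_1)=\beta(s)$. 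So $\beta(s)=\bfu\cdot\rme^{\bfi s/2}$, and since left- and right-multiplication in $S^3$ commute, the rotated flow line is $l_{\bfa}(\beta(s))=\bfa\bfu\cdot\rme^{\bfi s/2}$, which is itself an anti-diagonal flow line, starting at $\bfa\bfu\in S^3$. Writing $\bfa\bfu=w_0+w_1\bfj$, this orbit is $s\mapsto(\rme^{\bfi s/2}w_0,\rme^{-\bfi s/2}w_1)$.

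It then remains to project to $\CP^1$ and to identify a circle of latitude. The Hopf image is
\[ [\rme^{\bfi s/2}w_0:\rme^{-\bfi s/2}w_1]=[w_0:\rme^{-\bfi s}w_1].\]
Assuming $w_0,w_1\neq 0$, I would pass to the affine coordinate $z=z_1/z_0$ on $\CP^1\setminus\{[0:1]\}$: the curve becomes $s\mapsto\rme^{-\bfi s}w_1/w_0$, a circle of radius $|w_1/w_0|$ about the origin, traversed twice as $s$ varies over $\R/4\pi\Z$. By Remark~\ref{rem:cartesian}, $[1:0]$ and $[0:1]$ correspond to $0$ and $\infty$ under the stereographic projection from the south pole, and the rotations $z\mapsto\rme^{-\bfi s}z$ of $\C$ correspond to rotations of $S^2$ about the axis through these two points; the image is therefore a circle of latitude relative to the poles $[1:0]$ and $[0:1]$. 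The degenerate cases $w_0=0$ or $w_1=0$ give the poles themselves. The only substantive ingredient is the quaternionic identity in the first step; no genuine obstacle is to be expected.
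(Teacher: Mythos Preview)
Your proof is correct and reaches the same endpoint as the paper's, but via a slightly cleaner route. The paper writes $\bfa=w_0+w_1\bfj$ and expands the quaternion product $\bfa\cdot\beta(s)$ directly, obtaining $(w_0z_0-w_1\overline{z}_1)\,\rme^{\bfi s/2}+(w_0z_1+w_1\overline{z}_0)\,\rme^{-\bfi s/2}\bfj$, which then projects to $[1:c\,\rme^{-\bfi s}]$ exactly as yours does. Your observation that the anti-diagonal action is right-multiplication by $\rme^{\bfi s/2}$---so that $l_{\bfa}(\beta)$ is automatically another anti-diagonal orbit, through the point $\bfa\bfu$---replaces that four-term expansion by a one-line appeal to associativity; the remaining projection step is identical. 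The gain is modest but real: your version explains \emph{why} the rotated curve projects to a latitude circle (left-multiplication preserves the foliation of $S^3$ by anti-diagonal orbits, and every such orbit visibly projects to a latitude), whereas the paper's direct computation leaves this as an algebraic outcome.
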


\begin{proof}
Write $\bfa=w_0+w_1\bfj$ with $w_0,w_1\in\C$. We compute
\[ (w_0+w_1\bfj)(\rme^{\bfi s/2}z_0+\rme^{-\bfi s/2}z_1\bfj)=
(w_0z_0-w_1\overline{z}_1)\,\rme^{\bfi s/2}+
(w_0z_1+w_1\overline{z}_0)\,\rme^{-\bfi s/2}.\]
This projects to
\[ \Bigl[ 1:\frac{w_0z_1+w_1\overline{z}_0}{w_0z_0-w_1\overline{z}_1}\,
\rme^{-\bfi s}\Bigr],\]
which describes a circle of latitude on $S^2$
as in the proof of Proposition~\ref{prop:R-rotate}; see also
Figure~\ref{figure:stereographic}.
\end{proof}
\subsection{Proof of Theorem~\ref{thm:quaternionic}}
With $\bfa=\cos(\theta/2)+\sin(\theta/2)\,\bfk$ as in
Proposition~\ref{prop:R-rotate} and Section~\ref{subsection:magnetic}
we set $\alpha^{\theta}_{\varepsilon}:=l_{\bfa}(\alpha_{\bfi,\varepsilon})$.
As we have seen, the Reeb flow of $\alpha_{\bfi,\varepsilon}$
has precisely two periodic orbits for any irrational value
of the deformation parameter~$\varepsilon$. Since the
Reeb flow  of $\alpha_{\bfi,\varepsilon}$ is equivariant
with respect to the antipodal map, the induced flow on
$ST^*S^2$ likewise has precisely two periodic orbits, both doubly
covered by the corresponding orbit on~$S^3$. This proves
part (ii) of the theorem, once the Hamiltonian interpretation
in part (i) has been established.

In order to prove part (i), we may work on
$\bigl(\R^4\setminus\{0\},\tilde{\omega}^{\theta}\big)$,
viewed as the symplectisation of $(S^3,\alpha^{\theta})$,
cf.\ Section~\ref{subsection:magnetic}, provided we ensure that
our construction is $\Z_2$-invariant. Since the left-action of
$S^3$ on itself extends to an action on $\R^4\setminus\{0\}$,
it suffices to study the deformation $\alpha_{\bfi,\varepsilon}$
of~$\alpha_{\bfi}$.

On $\R^4\setminus\{0\}=\C^2\setminus\{0\}$ with the symplectic form
\[ \tilde{\omega}^0=4(\rmd x_0\wedge\rmd y_0+\rmd x_1\wedge\rmd y_1)\]
we consider the Hamiltonian functions
\[ H_0(z_0,z_1):=|z_0|^2+|z_1|^2\;\;\;\text{and}\;\;\;
F(z_0,z_1):=|z_0|^2-|z_1|^2,\]
and we set $H_{\varepsilon}:=H_0+\varepsilon F$. The Hamiltonian vector
field of $H_{\varepsilon}$ is
\[ X_{\varepsilon}=\frac{1+\varepsilon}{2}(x_0\partial_{y_0}-
y_0\partial_{x_0})+\frac{1-\varepsilon}{2}(x_1\partial_{y_1}-
y_1\partial_{x_1}).\]
So we recognise the Reeb vector field $R_{\bfi,\varepsilon}$ from 
Section~\ref{subsection:deformation} as the Hamiltonian vector field
of $X_{\varepsilon}$ restricted to the
level set $H_{\varepsilon}^{-1}(1)=E_{\varepsilon}$. Notice that
the form of $X_{\varepsilon}$ does not change under the
map that sends $S^3$ to $E_{\varepsilon}$.

Finally, we want to prove part~(iii). It suffices to
consider $\theta=\pi/2$. Recall that $\alpha^{\pi/2}=\alpha_{\bfj}$.
We may work in the symplectic manifold
\[ \bigl(\R^4\setminus\{0\},\tilde{\omega}^{\pi/2}=
2(\rmd x_1\wedge \rmd x_2+\rmd y_2\wedge\rmd y_1)\bigr).\]
Under the left-action of $\bfa\in S^3$,
the Hamiltonian function $H_{\varepsilon}$ transforms to the function
$H_{\varepsilon}\circ l_{\bfa}^{-1}=H_{\varepsilon}\circ l_{\obfa}$.
The original Hamiltonian function $H_0$ is obviously
invariant, since $l_{\bfa}$ is norm-preserving.
The angle $\theta=\pi/2$ corresponds to the quaternion
$\bfa=\sqrt{2}/2+\bfk\sqrt{2}/2$. We compute
\begin{eqnarray*}
\lefteqn{\Bigl(\frac{\sqrt{2}}{2}-\frac{\sqrt{2}}{2}\bfk\Bigr)\cdot
(x_0+y_0\bfi+x_1\bfj+y_1\bfk)=}\\
 &  & \Bigl(\frac{\sqrt{2}}{2}x_0+\frac{\sqrt{2}}{2}y_1\Bigr)
      +\Bigl(\frac{\sqrt{2}}{2}y_0+\frac{\sqrt{2}}{2}x_1\Bigr)\bfi+\\
 &  &  \Bigl(\frac{\sqrt{2}}{2}x_1-\frac{\sqrt{2}}{2}y_0\Bigr)\bfj
      +\Bigl(\frac{\sqrt{2}}{2}y_1-\frac{\sqrt{2}}{2}x_0\Bigr)\bfk.
\end{eqnarray*}
This yields
\[ F\circ l_{\obfa}(x_0,y_0,x_1,y_1)=x_0y_1+y_0x_1,\]
hence $H_{\varepsilon}$ transforms to
\begin{eqnarray*}
K_{\varepsilon}(x_0,y_0,x_1,y_1)
 & := & H_{\varepsilon}\circ l_{\obfa}(x_0,y_0,x_1,y_1)\\
 & =  & H_0(x_0,y_0,x_1,y_1)+\varepsilon(x_0y_1+y_0x_1).
\end{eqnarray*}
In the notation of the theorem we have $K_{\varepsilon}=
H_{\varepsilon}^{\pi/2}$.

The following homogeneity property will be essential for
the interpretation of the Hamiltonian flow of $K_{\varepsilon}$
as a Finsler geodesic flow.

\begin{lem}
When viewed as a function on $T^*S^2$, the Hamiltonian function
$K_{\varepsilon}$ is homogeneous of degree~$1$ in the fibre
coordinates.
\end{lem}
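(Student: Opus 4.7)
My approach is to observe that $K_\varepsilon$ is a homogeneous quadratic polynomial on $\R^4$, and to convert this into degree-$1$ homogeneity in the $T^*S^2$ fibre coordinates by comparing the Euler vector field on $\R^4$ with the canonical fibrewise Liouville vector field on $T^*S^2$.

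First I would observe that
\[ K_\varepsilon(x_0,y_0,x_1,y_1)=x_0^2+y_0^2+x_1^2+y_1^2+\varepsilon(x_0y_1+y_0x_1) \]
is a sum of quadratic monomials, hence homogeneous of degree $2$ under the Euler vector field $E=r\partial_r$ on $\R^4$; it is also invariant under the antipodal map, so it descends to a function $\bar K_\varepsilon$ on $\R^4\setminus\{0\}/\Z_2$.

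Next I would identify the Liouville vector field of the symplectic form $\tilde\omega^{\pi/2}=\rmd\alpha_{\bfj}$ on $\R^4\setminus\{0\}$ associated with the primitive
\[ \alpha_{\bfj}=2(-x_1\,\rmd x_0+y_1\,\rmd y_0+x_0\,\rmd x_1-y_0\,\rmd y_1). \]
This $1$-form, extended from $S^3$ to $\R^4$ by the same coordinate formula, is bilinear in the base point and the tangent vector, so it scales as $r^2$ under radial dilation; consequently, a direct calculation of $\iota_{Y'}\tilde\omega^{\pi/2}=\alpha_{\bfj}$ yields $Y'=\tfrac12 E$. By Lemma~\ref{lem:pullback}, which says $\Phi^*\lambda_1=\alpha_{\bfj}$, the double cover $\Phi\co S^3\to ST^*S^2$ extends uniquely to a symplectic double cover $\widetilde\Phi\co(\R^4\setminus\{0\},\tilde\omega^{\pi/2})\to(T^*S^2\setminus S^2,\rmd\lambda)$ intertwining the Liouville vector fields, given concretely by $\widetilde\Phi(r\bfu)=(\rho=r^2,\Phi(\bfu))$ under the identification $T^*S^2\setminus S^2\cong\R^+\times ST^*S^2$. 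In particular $\widetilde\Phi_*Y'=Y_{T^*S^2}$, the fibrewise radial vector field on $T^*S^2$.

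Combining these observations gives
\[ L_{Y_{T^*S^2}}\bar K_\varepsilon = L_{Y'}K_\varepsilon = \tfrac12 L_E K_\varepsilon = \bar K_\varepsilon, \]
which by Euler's identity is exactly degree-$1$ homogeneity in the fibre coordinates. The only point requiring any care is the factor $1/2$ relating $Y'$ to~$E$, equivalently the identification $\widetilde\Phi^*\rho=r^2$; this is forced by the quadratic scaling of $\alpha_{\bfj}$ under radial dilation of~$\R^4$, and is precisely what makes $K_\varepsilon$ interpretable as the square root of a quadratic Hamiltonian, i.e.\ as a Finsler co-norm.
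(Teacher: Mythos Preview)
Your proof is correct and follows essentially the same route as the paper's: both arguments extend $\Phi$ to a symplectic double cover $\widetilde{\Phi}$ intertwining the Liouville vector field $Y_{\R^4}=\tfrac{1}{2}E$ with the fibrewise radial field $Y_{T^*S^2}$, and then invoke Euler's identity to convert the obvious degree-$2$ homogeneity of $K_\varepsilon$ on $\R^4$ into degree-$1$ fibrewise homogeneity on $T^*S^2$. Your explicit identification $\widetilde{\Phi}^*\rho=r^2$ and the computation of $Y'$ from the extended $\alpha_{\bfj}$ just make transparent what the paper packages as ``sending Liouville flow lines to Liouville flow lines.''
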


\begin{proof}
The strict contactomorphism between $(S^3,\alpha_{\bfj})$ and
the double cover of the unit cotangent bundle
$(ST^*S^2,\lambda_1)$ extends to a
symplectomorphism
\[ \tilde{\Phi}\co\bigl(\R^4\setminus\{0\},\tilde{\omega}^{\pi/2}\bigr)
\stackrel{\cong}{\longrightarrow}
\bigl(\widetilde{T^*S^2\setminus S^2},\rmd\lambda\bigr)\]
by sending the flow lines of the Liouville vector field
\[ Y_{\R^4}=\frac{1}{2}(x_0\partial_{x_0}+y_0\partial_{y_0}+
x_1\partial_{x_1}+y_1\partial_{y_1})\]
for $\tilde{\omega}^{\pi/2}$ to those of the fibrewise
radial Liouville vector field $Y_{T^*S^2}$ (see the
discussion before Remark~\ref{rem:Ham-Reeb}), lifted to the
double cover.

A function $G\co T^*S^2\rightarrow\R$ is homogeneous of degree~$1$
in the fibre coordinates precisely if $\rmd G(Y_{T^*S^2})=G$.
Then the pulled-back function $G\circ\tilde{\Phi}$ satisfies
the equation
\[ \rmd(G\circ\tilde{\Phi})(Y_{\R^4})=G\circ\tilde{\Phi}.\]
By Euler's theorem, because of the factor $1/2$ in $Y_{\R^4}$,
this means that $G\circ\tilde{\Phi}$ is homogeneous of degree~$2$
in $(x_0,y_0,x_1,y_1)$. Conversely, a homogeneous function
on $\R^4$ is mapped to a fibrewise homogeneous function on $T^*S^2$
of half the degree of homogeneity.

Since $K_{\varepsilon}$ is homogeneous of degree~$2$ on~$\R^4$,
this proves the lemma.
\end{proof}

\begin{rem}
Since $H_0$ takes the value $1$ on $S^3$, the corresponding
Hamiltonian function on $T^*S^2$ is $\bfv^*\mapsto\|\bfv^*\|_*$.
\end{rem}

As observed in Remark~\ref{rem:geodesic}, the Hamiltonian flow
of $H_0^2/2$ is the geodesic flow. The Hamiltonian flow of
$H_0$ is simply a reparametrisation of it, and on $ST^*S^2$
the two flows coincide, since $X_{H_0^2/2}=H_0^{-1}X_{H_0}$.
Similarly, the Hamiltonian flow of the function $K_{\varepsilon}$
is just a reparametrisation of the one defined by its square.
The latter is fibrewise homogeneous of degree~$2$ and, for $\varepsilon=0$,
fibrewise strictly convex. This condition is preserved for
small values of~$\varepsilon$, in which case $(K_{\varepsilon})^2$
gives rise to a Finsler metric, see~\cite[p.~137]{zill82}
or~\cite{dgz17}.

This completes the proof of Theorem~\ref{thm:quaternionic}.

\begin{rem}
(1) In the Finsler case ($\theta=\pi/2$), the two geodesics that survive the
perturbation equal the great circle on $S^2$ with respect to
the poles $[1:0]$ and $[0:1]$, traversed in opposite directions
(and with different speeds). Since the Finsler metric is non-reversible,
this does indeed count as two geodesics.

(2) If we label the coordinates on $\R^4=\HH$ as
\[ (u_0,u_1,u_2,u_3)\;\;\text{instead of}\;\; (x_0,y_0,x_1,y_1),\]
the perturbation of the Hamiltonian function in the Finsler case
is described by $\varepsilon(u_0u_3+u_1u_2)$.
Computing with the conventions for cartesian
coordinates as in Remark~\ref{rem:cartesian}, one finds
with $\bfx=\obfu\bfi\bfu$ and $\bfy=\obfu\bfk\bfu$ that
\[ -2(u_0u_3+u_1u_2)=x_1y_2-x_2y_1.\]
This means that on $ST^*S^2$ the perturbation term is described by
\[ -\frac{\varepsilon}{2}\langle (y_1,y_2,y_3),(0,-x_3,x_2)\rangle.\]
Since this expression is homogeneous of degree~$1$ in the
fibre coordinates, it describes the perturbation term on the
full cotangent bundle $T^*S^2$.
Thus, up to a constant scale, the perturbation term
is given by the function which sends a point in $T^*S^2$,
i.e.\ a covector, to its evaluation on the vector field
defining the rotation of $S^2$ about the $x_1$-axis.
This is precisely the type of perturbation in Katok's
example as described in~\cite[p.~137]{zill82}.

(3) Observe that the perturbation term, when interpreted dually on
the tangent bundle $TS^2$, is given by a $1$-form $\mu$ on $S^2$ evaluated
on tangent vectors, that is, the Finsler norm of $\bfv\in T_{\bfx}S^2$ is
$\sqrt{g_{0,\bfx}(\bfv,\bfv)}+\mu_{\bfx}(\bfv)$.
Finsler metrics of this type are called Randers metrics.
\end{rem}
\section{Reeb flows with a finite number of periodic orbits}
\label{section:finite}
The example in Section~\ref{subsection:deformation}
of a contact form on $S^3$ with precisely two periodic Reeb
orbits easily generalises to higher dimensions. Given
rationally independent positive real numbers $a_0,\ldots,a_n$,
the $1$-form
\[ \sum_{i=0}^na_i(x_i\,\rmd y_i-y_i\,\rmd x_i)\]
on $\R^{2n+2}$ induces a contact form on the unit sphere
$S^{2n+1}\subset\R^{2n+2}$ whose periodic Reeb orbits are
given by the intersection of that sphere with one
of the $x_iy_i$-planes. So there are precisely $n+1$ periodic
Reeb orbits. Conjecturally, this is the smallest possible
number of periodic Reeb orbits in the given dimension. 

Our aim in this section is to describe contact manifolds
with a finite number of periodic Reeb orbits. In dimension three,
there are some deep results concerning this issue. Taubes~\cite{taub07} has
shown the existence of at least one periodic Reeb orbit on any
closed contact $3$-manifold, thus giving a positive answer
to the Weinstein conjecture in this dimension. This has been
extended by Cristofaro-Gardiner and Hutchings~\cite{cghu16},
who have shown that there will always be at least two
periodic Reeb orbits.

It is an open question whether any contact form on a closed, connected
$3$-manifold that has more than two periodic Reeb orbits
actually possesses infinitely many of them. Under some additional
assumptions, a positive answer to this question has recently been
given by Cristofaro-Gardiner, Hutchings and Pomerleano~\cite{chp17}.
Recall that a contact form is called non-degenerate if the
linearised return map at any periodic Reeb orbit does not have $1$
as an eigenvalue.

\begin{thm}[Cristofaro-Gardiner, Hutchings, Pomerleano]
Let $M$ be a closed, connected $3$-manifold with a non-degenerate
contact form $\alpha$. Assume further that the Euler class
of the contact structure is a torsion element in $H^2(M;\Z)$.
Then $\alpha$ has either two or infinitely many periodic Reeb orbits.
\end{thm}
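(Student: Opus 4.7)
The plan is to use embedded contact homology (ECH), developed by Hutchings. First I would set up the ECH chain complex $ECC_*(M,\alpha,J)$ for a generic compatible almost complex structure $J$ on the symplectisation $\R\times M$; its generators are \emph{orbit sets}, finite formal products $\prod\alpha_i^{m_i}$ of distinct simple periodic Reeb orbits with positive integer multiplicities (with hyperbolic $\alpha_i$ constrained to multiplicity one), and its differential counts ECH-index-one $J$-holomorphic currents. The torsion hypothesis on the Euler class of $\xi=\ker\alpha$ supplies the well-defined absolute integer grading needed for what follows.

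Next I would bring in the ECH spectral invariants $c_k(M,\alpha)$: iterating the degree $-2$ map $U\co ECH_*\to ECH_{*-2}$ on a distinguished homology class (the contact invariant) produces a sequence of classes, and $c_k$ is defined as the infimum of the symplectic action over cycles representing the $k$-th $U$-iterate. The crucial analytic input is the Weyl-type asymptotic formula of Cristofaro-Gardiner, Hutchings and Ramos,
\[ \lim_{k\to\infty}\frac{c_k(M,\alpha)^2}{k}=2\,\mathrm{vol}(M,\alpha),\]
which ties the ECH action spectrum to the contact volume.

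The argument then proceeds by contradiction: assume $\alpha$ possesses $N$ periodic Reeb orbits with $2<N<\infty$. Each $c_k$ is realised as the total action of some admissible orbit set drawn from this finite list, so every $c_k$ lies in the discrete sub-semigroup of $\R_{\geq 0}$ generated (subject to the hyperbolic constraint) by the $N$ simple actions. Using the ECH index inequality, one shows that the difference $c_{k+1}-c_k$ is controlled by the action of a single $J$-holomorphic $U$-curve, whose topology is further constrained by the fixed finite orbit set.

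The hard part will be the final dynamical-combinatorial step: one must exploit the partition conditions on ECH index-one and index-two currents, together with the elliptic/hyperbolic classification and the rotation numbers at the elliptic orbits, to show that the quadratic growth forced by the Weyl law is incompatible with $N>2$. The dichotomy two-or-infinity emerges because in the borderline case $N=2$ the two orbits may have irrational rotation numbers whose integer combinations alone realise the full asymptotics (as happens on the irrational ellipsoid, cf.\ Section~\ref{subsection:deformation}), whereas any finite configuration with $N>2$ turns out to violate either the Weyl asymptotics or the ECH partition conditions, forcing the existence of infinitely many further orbits.
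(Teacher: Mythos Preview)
The paper does not prove this theorem at all. It is quoted as a result of Cristofaro-Gardiner, Hutchings and Pomerleano~\cite{chp17} purely for context, to contrast the three-dimensional rigidity phenomenon with the higher-dimensional flexibility that the authors then establish in Theorem~\ref{thm:finite}. There is therefore no ``paper's own proof'' to compare your proposal against.

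As for your sketch on its own merits: the broad strategy---ECH spectral invariants $c_k$, the volume asymptotics $c_k^2/k\to 2\,\mathrm{vol}$, and a contradiction from a putative finite orbit set with more than two elements---is indeed the architecture of the actual argument in~\cite{chp17}. However, your final paragraph is not yet a proof but a hope: you correctly identify that the ``hard part'' lies in showing that $2<N<\infty$ is incompatible with the Weyl law via the ECH index and partition conditions, but you have not indicated \emph{how} this incompatibility is derived. In the actual paper this step is substantial and does not reduce to a simple combinatorial observation; it involves a careful analysis of the $U$-map and the structure of the ECH differential when the orbit set is finite, together with the behaviour of the ECH index under iteration. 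If you intend to reproduce the proof rather than cite it, that step needs to be filled in with real content; as written, the proposal is an accurate table of contents for~\cite{chp17} rather than a proof.
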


Combining this with a result of Hutchings and Taubes, it follows that
under the assumptions of non-degeneracy and the Euler class being torsion,
contact forms with precisely two periodic Reeb orbits
exist only on $S^3$ and lens spaces.

Here we show that the situation in higher dimensions is completely
different. There are examples of closed, connected contact manifolds
with high finite numbers of periodic Reeb orbits.

We also consider closed characteristics on hypersurfaces in
symplectic manifolds. Recall that the \emph{characteristics}
of a hypersurface $M$ in a symplectic manifold $(W,\omega)$
are the integral curves of $\ker(\omega|_{TM})$. The characteristics
may also be regarded as the orbits of the Hamiltonian flow defined
by any Hamiltonian function on $W$ having $M$ as a level set.
We write $\omega_{\mathrm{st}}=\sum_{i=1}^{n+1} \rmd x_i\wedge
\rmd y_i$ for the standard symplectic form on~$\R^{2n+2}$.

\begin{thm}
\label{thm:finite}
(a) In any dimension $2n+1\geq 5$, there are closed, connected contact
manifolds with arbitrarily large finite numbers of periodic Reeb orbits.
In dimension five, for instance, any number $\geq 3$ can be so realised.

(b) For any natural number $n\geq 2$ and any non-negative
integer $k$ there is a closed, connected hypersurface in $\R^{2n+2}$ with
precisely $k$ closed characteristics. For $n=1$, any number
$k\geq 2$ can be realised.
\end{thm}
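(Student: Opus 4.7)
The plan for part~(a) is to combine the Boothby--Wang construction with a perturbation coming from a Hamiltonian $S^1$-action on the base. I would start with a closed symplectic manifold $(B^{2n},\omega)$ with integral cohomology class $[\omega]$, and form the Boothby--Wang prequantization bundle $\pi\co M\to B$ with connection contact form $\alpha$ satisfying $\rmd\alpha=\pi^*\omega$; the Reeb vector field $R_0$ is then the fibre generator, so every Reeb orbit is closed of equal period. Given a Hamiltonian $S^1$-action on $B$ with moment map $H$ and isolated fixed-point set, I would deform to $\alpha_\varepsilon:=\alpha/(1+\varepsilon\pi^*H)$ for small $\varepsilon$ (after a shift ensuring positivity). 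A direct calculation yields the Reeb vector field $R_\varepsilon=(1+\varepsilon\pi^*H)R_0+\varepsilon\widetilde{X}_H$, where $\widetilde{X}_H$ is the horizontal lift of~$X_H$. The lifted $S^1$-action together with the fibre rotation defines a $T^2$-action on $M$ of which $R_\varepsilon$ is an infinitesimal generator; for generic irrational $\varepsilon$, the flow on generic 2-torus orbits is irrational, so the only closed Reeb orbits are the fibres over the fixed points of the $B$-action, one per fixed point.

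To realize any $k\geq 3$ in dimension five, take $B$ to be the toric blow-up of $\CP^2$ at $k-3$ generic points (with symplectic form rescaled to be integral), whose moment polytope has $k$ vertices; a generic subcircle of the induced $T^2$-action gives a Hamiltonian $S^1$-action with exactly $k$ isolated fixed points. Higher-dimensional toric manifolds and products realize arbitrarily many fixed points in any odd dimension $2n+1>5$.

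For part~(b), I would combine Hamiltonian plugs~\cite{grz16} with Laudenbach's surgery~\cite{laud97}. Starting from an irrational ellipsoid in $\R^{2n+2}$ carrying the $n+1$ closed characteristics along the coordinate directions, a Hamiltonian plug installed in a small ball around a chosen periodic orbit replaces the local dynamics by a flow without periodic orbits and produces a new closed connected hypersurface in $\R^{2n+2}$ with one fewer closed characteristic (and no newly born orbits). Conversely, Laudenbach's surgery splices two hypersurfaces along paired closed characteristics in a controlled way, so that connect-summing ellipsoids increases the orbit count in a predictable fashion. Alternating plug deletions with surgery additions then realizes every $k\geq 0$ when $n\geq 2$; for $n=1$ the same procedure reaches any $k\geq 2$, the lower bound being forced by the Cristofaro-Gardiner--Hutchings theorem.

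The main obstacle in part~(a) is verifying that, for generic~$\varepsilon$, no periodic orbit of $R_\varepsilon$ projects nontrivially to a closed orbit of $X_H$ on~$B$: this reduces to an irrationality condition on the fibrewise holonomy of $R_\varepsilon$ around closed $X_H$-orbits, which I would extract from the explicit form of~$R_\varepsilon$. In part~(b), the delicate step is the precise bookkeeping at the interface of plugs and surgeries: one must ensure that each plug annihilates exactly one closed characteristic without creating new ones, and that Laudenbach's surgery interfaces cleanly with hypersurfaces already carrying plugs, so that the cumulative orbit count matches the prescribed~$k$.
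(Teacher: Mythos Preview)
Your approach to part~(a) is essentially identical to the paper's: Boothby--Wang bundle, lifted Hamiltonian $S^1$-action, resulting $T^2$-action, and an irrational-slope Reeb vector field whose only closed orbits are the fibres over the fixed points. The paper's base is $\CP^n\#_a\overline{\CP}^n$ with $n+1+a(n-1)$ fixed points, which for $n=2$ is exactly your toric blow-up of $\CP^2$ with $3+a$ vertices. The ``obstacle'' you flag is precisely what the paper resolves by a holonomy computation showing that the lifted flow is genuinely $S^1$-periodic, so that $R_\varepsilon$ really sits inside a $T^2$-action.

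For part~(b) you have named the two correct ingredients but you have misidentified what Laudenbach's construction actually does. It is \emph{not} a connect-sum of two hypersurfaces along paired closed characteristics. It is a purely local modification of a \emph{single} hypersurface inside one Darboux chart: four simultaneous index-$1$ surgeries, each replacing two balls by a cylinder $S^{2n}\times[-1,1]$ whose equatorial $S^{2n-1}$ is realised as an irrational ellipsoid carrying exactly $n$ closed characteristics. The four-fold symmetric arrangement guarantees that any orbit entering the modified region either becomes trapped or exits exactly as if nothing had happened (this is the plug-like feature). Hence each Laudenbach surgery adds precisely $4n$ closed characteristics, or $4n-1$ if the chart is placed so that one pre-existing closed characteristic gets trapped. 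The paper's arithmetic is then: starting from the irrational ellipsoid with $n+1$ orbits, one reaches $n+1+b(4n-1)+c\cdot 4n$ for all $b,c\geq 0$, which covers every integer from $16n^2-11n+3$ upward; Hamiltonian plugs then delete orbits one at a time to reach all smaller non-negative integers. Your ``splicing ellipsoids'' picture would require controlling the dynamics on a connecting tube between two separate hypersurfaces, which is exactly the difficulty Laudenbach's four-surgery symmetry is designed to circumvent; as you have described it, the bookkeeping cannot be carried out.

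One further correction: for $n=1$ the lower bound $k\geq 2$ is \emph{not} forced by Cristofaro-Gardiner--Hutchings, which applies to contact forms rather than arbitrary hypersurfaces. The reason is simply that the three-dimensional Hamiltonian plug of \cite{grz16} itself contains two periodic orbits, so each ``deletion'' in fact replaces one closed characteristic by two, and the construction cannot descend below~$2$.
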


\begin{rem}
Part (b) has also been proved, using more intricate arguments,
by Cieliebak~\cite[Corollary~J]{ciel97}. In fact, Cieliebak
shows that for $k\geq 2$ closed characteristics,
the hypersurface may be taken to be of
so-called confoliation type.
\end{rem}
\subsection{Boothby--Wang bundles}
We begin by considering contact forms on principal
$S^1$-bundles that arise from the Boothby--Wang construction~\cite{bowa58},
cf.~\cite[Section~7.2]{geig08}. Thus, we assume that
$B$ is a closed manifold with a symplectic form $\omega$ such that
the de Rham cohomology class $-[\omega/2\pi]\in H^2_{\mathrm{dR}}(B)$ is
integral, i.e.\ it lies in the image of the homomorphism
$H^2(B;\Z)\rightarrow H^2(B;\R)=H^2_{\mathrm{dR}}(B)$
induced by the inclusion $\Z\rightarrow\R$ of coefficients.
One can then find a connection $1$-form $\alpha$ on the principal $S^1$-bundle
$\pi\co M\rightarrow B$ of Euler class $-[\omega/2\pi]$ with
curvature form~$\omega$, that is, $\rmd\alpha=\pi^*\omega$.
The assumption that $\omega$ be symplectic then translates into
$\alpha$ being a contact form. We write $R$ for the Reeb vector field
of~$\alpha$. The orbits of $R$ are the fibres of the $S^1$-bundle.
Our normalisation of the curvature form corresponds to regarding
$S^1$ as $\R/2\pi\Z$.
\subsection{Lifting Hamiltonian vector fields}
Consider a smooth function $H\co B\rightarrow\R$ and its
Hamiltonian vector field $X=X_{H}$ with respect to
the symplectic form~$\omega$. Our aim is to lift $X$ to a
Hamiltonian vector field $\tX$ with respect to the contact
form $\alpha$ on~$M$. This can be done in such a way that
the corresponding contact Hamiltonian is invariant in the
$R$-direction, so that the flow of $\tX$ preserves the
contact form~$\alpha$, not only the contact structure~$\ker\alpha$.

\begin{lem}
Let $\Xh$ be the horizontal lift of~$X$, that is, the
vector field on $M$ with $\alpha(\Xh)=0$ and $T\pi(\Xh)=X$,
and let $\tilde{H}:=H\circ\pi\co M\rightarrow\R$ be the lift of~$H$.
Then the flow of the vector field $\tX:=\tilde{H}R+\Xh$
preserves~$\alpha$. If $H$ (and hence $\tilde{H}$)
takes positive values only, the lifted vector field $\tX$ is the
Reeb vector field of the rescaled contact form $\alpha/\tilde{H}$.
\end{lem}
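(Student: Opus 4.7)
The plan is to show $\mathcal{L}_{\tX}\alpha=0$ by direct computation via Cartan's formula, and then verify that $\tX$ satisfies the two defining equations of the Reeb vector field for $\alpha/\tilde{H}$. No clever trick is needed; everything follows from the definitions of the horizontal lift, the Boothby--Wang curvature equation $\rmd\alpha=\pi^*\omega$, and the Hamiltonian equation $\iota_X\omega=-\rmd H$ set up in Section~\ref{section:quaternionic}.

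For the first assertion, I would compute the two pieces of $\mathcal{L}_{\tX}\alpha=\iota_{\tX}\rmd\alpha+\rmd(\iota_{\tX}\alpha)$ separately. Since $\alpha(R)=1$, $\alpha(\Xh)=0$ and $\tilde{H}=H\circ\pi$, the inner product $\iota_{\tX}\alpha=\tilde{H}$, so $\rmd(\iota_{\tX}\alpha)=\pi^*\rmd H$. For the other piece, $\iota_R\rmd\alpha=0$ (since $R$ is the Reeb field of $\alpha$) and $\iota_{\Xh}\rmd\alpha=\iota_{\Xh}\pi^*\omega$; evaluating on any tangent vector $Y$ and using $T\pi(\Xh)=X$ together with $\iota_X\omega=-\rmd H$, one obtains $\iota_{\Xh}\pi^*\omega=-\pi^*\rmd H=-\rmd\tilde{H}$. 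The two contributions cancel, giving $\mathcal{L}_{\tX}\alpha=0$.

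For the second assertion, set $\alpha':=\alpha/\tilde{H}$, which is a well-defined contact form wherever $\tilde{H}>0$. The normalisation $\alpha'(\tX)=1$ is immediate from $\iota_{\tX}\alpha=\tilde{H}$. It remains to check $\iota_{\tX}\rmd\alpha'=0$, where
\[
\rmd\alpha'=\frac{1}{\tilde{H}}\,\rmd\alpha-\frac{1}{\tilde{H}^2}\,\rmd\tilde{H}\wedge\alpha.
\]
Expanding $\iota_{\tX}\rmd\alpha'$ using $\iota_{\tX}\rmd\alpha=-\rmd\tilde{H}$ and $\iota_{\tX}\alpha=\tilde{H}$, the computation reduces to showing that $\rmd\tilde{H}(\tX)=0$. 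But $\rmd\tilde{H}(R)=\pi^*\rmd H(R)=0$ since $R$ is vertical, while $\rmd\tilde{H}(\Xh)=\rmd H(X)=-\omega(X,X)=0$ from the defining equation of $X=X_H$. Hence $\tX$ equals the Reeb vector field of $\alpha/\tilde{H}$.

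There is no genuine obstacle here; the only point to watch is the sign convention $\iota_{X_H}\omega=-\rmd H$ fixed in Section~\ref{section:quaternionic}, which makes the two contributions to $\mathcal{L}_{\tX}\alpha$ cancel with the right sign, and the standard identity $\rmd H(X_H)=0$ that kills the extra term in $\iota_{\tX}\rmd\alpha'$.
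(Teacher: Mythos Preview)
Your proof is correct. The computation of $\mathcal{L}_{\tX}\alpha=0$ via Cartan's formula is essentially identical to the paper's. For the second assertion the paper takes a slightly different route: rather than computing $\iota_{\tX}\rmd(\alpha/\tilde{H})$ explicitly, it observes that since the flow of $\tX$ preserves $\alpha$ it also preserves the contact structure $\ker(\alpha/\tilde{H})$, and since $(\alpha/\tilde{H})(\tX)=1$, the theory of contact Hamiltonians forces $\tX$ to be the Reeb vector field of $\alpha/\tilde{H}$. Your direct verification is more self-contained (no appeal to an external reference), while the paper's argument is a touch more conceptual and avoids expanding $\rmd(\alpha/\tilde{H})$; the two are equivalent and equally short.
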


\begin{proof}
We compute the Lie derivative, writing $\iota$ for the interior product:
\begin{eqnarray*}
L_{\tX}\alpha & = & \rmd(\alpha(\tX))+\iota_{\tX}\rmd\alpha\\
              & = & \rmd\tilde{H} +\iota_{\Xh}\rmd\alpha\\
              & = & \rmd\tilde{H}+\pi^*(\iota_X\omega)\\
              & = & 0,
\end{eqnarray*}
which proves the invariance of $\alpha$ under the flow of~$\tX$.
The statement about $\tX$ being the Reeb vector field of $\alpha/\tilde{H}$
in the case $\tilde{H}>0$
follows from the theory of contact Hamiltonians \cite[Section~2.3]{geig08},
since the flow of $\tX$ preserves the contact structure
$\ker(\alpha/\tilde{H})$, and $\tX$ evaluates to $1$ on this rescaled contact
form.
\end{proof}

\begin{rem}
Adding a constant $c\in\R$ to $H$ (and hence $\tilde{H}$) does not change
$X$ or its horizontal lift~$\Xh$; the lifted Hamiltonian vector field
$\tX$ changes by~$cR$. In this way we obtain all
possible lifts of $X$ to a vector field whose flow preserves~$\alpha$.
By such a change we can always achieve $\tilde{H}>0$,
since $B$ is assumed to be closed.
\end{rem}
\subsection{Lifting Hamiltonian $S^1$-actions}
We now suppose that the Hamiltonian vector field $X=X_H$ on $(B,\omega)$
induces an action by the circle $S^1=\R/2\pi\Z$.
We normalise $H$ by adding a constant in such a
way that $H>0$ and, at some chosen singularity $p_0$ of~$X$,
the value $H(p_0)$ is a natural number.
The following argument for showing that the lifted vector field $\tX$
likewise induces an $S^1$-action is similar to the one used
in~\cite{lerm02}.

\begin{prop}
A Hamiltonian $S^1$-action on $(B,\omega)$ lifts to a Hamiltonian
$S^1$-action on the total space $(M,\alpha)$ of
the Boothby--Wang bundle.
\end{prop}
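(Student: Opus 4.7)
By the previous lemma the vector field $\tilde X = \tilde H\,R + \Xh$ has a flow $\phi_t$ that preserves~$\alpha$ and covers the flow $\psi_t$ of $X = X_H$ on~$B$. It suffices to prove that $\phi_t$ is $2\pi$-periodic; the resulting $S^1$-action then automatically preserves~$\alpha$ (and is Hamiltonian, with contact Hamiltonian~$\tilde H$). A first observation is that $[R,\tilde X] = 0$: the identity $\mathcal L_R\alpha = 0$ forces $\ker\alpha$ to be $S^1$-invariant, so the horizontal lift $\Xh$ is $S^1$-invariant, i.e.\ $[R,\Xh] = 0$; combined with $R(\tilde H) = R(H\circ\pi) = 0$, this yields the vanishing bracket. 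Consequently $\phi_t$ commutes with the principal $S^1$-action, so $\phi_{2\pi}$ acts on each fibre by rotation by some angle $\theta(p)\in\R/2\pi\Z$ depending only on $p = \pi(q)$.

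\textbf{Next} I would compute $\theta(p)$ explicitly. Let $F_t$ denote the flow of~$\Xh$. Using $[R,\Xh]=0$ and the $S^1$-invariance of~$\Xh$, one verifies by differentiation that
\[
\phi_t(q) \;=\; F_t(q)\cdot\exp\Bigl(\bfi\int_0^t H(\psi_s(\pi(q)))\,\rmd s\Bigr).
\]
Because $H$ is constant along its own Hamiltonian orbits, the integral at $t = 2\pi$ equals $2\pi H(p)$. Because $\psi_{2\pi} = \mathrm{id}_B$, the curve $F_t(q)$ is a horizontal lift of the closed loop $\gamma_p := \{\psi_t(p):t\in[0,2\pi]\}$, so $F_{2\pi}(q) = q\cdot e^{\bfi\tau(p)}$ with $\tau(p)$ the holonomy of $\alpha$ around~$\gamma_p$. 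Hence $\theta(p) \equiv 2\pi H(p) + \tau(p)\pmod{2\pi}$.

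\textbf{The heart of the argument} is the identity $\theta(p) \equiv 0\pmod{2\pi}$ for every $p$. I would fix a path $\beta$ in $B$ from the fixed point $p_0$ to $p$; the map $(s,t)\mapsto\psi_t(\beta(s))$ then parametrises a $2$-chain $D_p$ whose boundary, as a singular chain, is the loop $\gamma_p$ (the $s=0$ side degenerates to the fixed point~$p_0$, and the $t=0$ and $t=2\pi$ sides cancel). Since $\rmd\alpha = \pi^*\omega$, the standard holonomy--curvature formula gives $\tau(p)\equiv -\int_{D_p}\omega\pmod{2\pi}$. A direct computation, using $\iota_{X_H}\omega = -\rmd H$ together with $\psi_t^*H = H$, reduces the integrand to $\rmd H(\dot\beta(s))$, so $\int_{D_p}\omega = 2\pi(H(p) - H(p_0))$. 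Therefore $\theta(p)\equiv 2\pi H(p_0)\pmod{2\pi}$, and the normalisation $H(p_0)\in\mathbb{N}$ makes this zero, as required.

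\textbf{The main obstacle} is this third step: the holonomy--curvature calculation, where all sign conventions ($\rmd\alpha = \pi^*\omega$, $\iota_{X_H}\omega = -\rmd H$, $S^1 = \R/2\pi\Z$) must be tracked in lock-step. Conceptually this step recovers the familiar prequantisation criterion: a Hamiltonian $S^1$-action on $(B,\omega)$ lifts to a prequantum circle bundle exactly when its moment map takes integer values at some (hence every) fixed point, which is precisely the normalisation arranged before the statement of the proposition.
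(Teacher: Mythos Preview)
Your proof is correct and follows essentially the same route as the paper: sweep a path from the chosen fixed point $p_0$ to $p$ under the $S^1$-action to obtain a $2$-chain, compute the holonomy of the horizontal lift as the integral of the curvature form $\omega$ over that chain, and reduce that integral via $\iota_{X_H}\omega=-\rmd H$ to $2\pi\bigl(H(p)-H(p_0)\bigr)$, which cancels the $R$-contribution $2\pi H(p)$ modulo~$2\pi$ by the normalisation $H(p_0)\in\Z$. The only organisational difference is that the paper treats fixed points of $X$ separately (first showing $H(p)-H(p_0)\in\Z$ at every fixed point, then handling generic points), whereas you front-load the commutativity $[R,\tilde X]=0$ (which the paper proves as a separate lemma afterwards) and use it to treat all points uniformly via the factorisation $\phi_t(q)=F_t(q)\cdot\exp\bigl(\bfi\!\int_0^t H\bigr)$.
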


\begin{proof}
Over a singularity $p\in B$ of the vector field $X$ defining the
$S^1$-action on $B$, the lifted vector field $\tX$ equals $H(p)R$,
so the $\tX$-orbit through any point in the fibre $\pi^{-1}(p)$
is precisely that fibre, traversed positively. For $p=p_0$,
the fibre is traversed $H(p_0)$ times. In order for the $\tX$-flow
along the fibre over any other singularity $p$ to define an $S^1$-action,
the value $H(p)$ has to be integral. This requirement is
indeed satisfied, as the following argument shows.

Choose a smooth path $\gamma\co [0,1]\rightarrow B$ from $\gamma(0)=p_0$
to $\gamma(1)=p$. By acting on this path with the Hamiltonian
$S^1$-action, we obtain a $2$-sphere $S\subset B$, which we
orient by the ordered frame $\dot{\gamma},X$. Here `sphere'
is understood in the sense of smooth singular theory,
see~\cite[Section~V.5]{bred93}. We then compute
\begin{eqnarray*}
H(p)-H(p_0)
 & = & \int_{\gamma}\rmd H\;\;=\;\;
       -\int_{\gamma}(\iota_X\omega)\\
 & = & \frac{1}{2\pi}\int_{S}\omega\;\;=\;\;
       \left\langle[\omega/2\pi], [S]\right\rangle,
\end{eqnarray*}
which is, up to sign, simply the evaluation of the Euler class on the
homology class~$[S]$, and hence an integer.

For any non-singular point $q\in B$ of $X$ we consider the
$S^1$-orbit $\beta$ through $q$, which is a circle in~$B$, perhaps
multiply covered. We need to show that the $\tX$-path $\tilde{\beta}$
over $\beta$ is likewise a closed loop. By choosing a path $\gamma$
from $p_0$ to $q$ and acting on it as before, we create a disc
$\Delta$ in $B$ with boundary~$\beta$, again in the sense of
smooth singular theory.

The horizontal lift $\beta_{\mathrm{h}}$ of $\beta$ starts at some
point $\tilde{q}$ in the $S^1$-fibre over $q$ and ends at
$\rme^{\rmi h(\beta)}$, where $h(\beta)$ denotes the holonomy of~$\beta$.
This is computed as $h(\beta)=-\int_{\Delta}\omega$, as can be seen
either by applying the theorem of Stokes to a lifted disc
$\tilde{\Delta}$ (with a boundary segment of oriented length $-h(\beta)$
lying in the fibre over~$q$) or by computing explicitly
in a trivialisation of the $S^1$-bundle over~$\Delta$.
Notice that the condition that $-[\omega/2\pi]$ be an integral
cohomology class guarantees that, modulo~$2\pi$, the holonomy
does not depend on the choice of disc $\Delta$ bounded by~$\beta$,
since the evaluation of $\omega$ over any $2$-sphere made up of two such 
discs (one with reversed orientation) lies in $2\pi\Z$.

Now, observing that the positive orientation of $\Delta$ (for the
boundary orientation defined by~$X$) is defined
by the ordered frame $\dot{\gamma},X$, we essentially perform the
previous computation backwards:
\begin{eqnarray*}
h(\beta)\;\;=\;\;-\int_{\Delta}\omega
 & = & 2\pi\int_{\gamma}\iota_X\omega\\
 & = & -2\pi\int_{\gamma}\rmd H\\
 & = & -2\pi\bigl(H(q)-H(p_0)\bigr)\;\;\equiv\;\;
       -2\pi H(q)\;\mbox{\rm mod}\; 2\pi.
\end{eqnarray*}
On the other hand, along $\beta$ the Hamiltonian function takes
the constant value $H(q)$, so the contribution of the $R$-component
of $\tX$ to the flow over a period $2\pi$ is $2\pi H(q)$, which cancels
the holonomy. This shows that $\tilde{\beta}$ is indeed a smooth loop.
\end{proof}

\begin{rem}
There are far more general results about the lifting of group actions
to principal bundles. The lifting of Hamiltonian group actions
to prequantisation line bundles is discussed in \cite{ggk02}, see in
particular Theorem~6.7 and Example~6.10 there, and
\cite[Corollary~1.3]{mund01}.
\end{rem}
\subsection{A torus action on the Boothby--Wang bundle}
We assume that we are in the situation of the preceding section.
Apart from the $S^1$-action on $(M,\alpha)$ lifted from the
Hamiltonian circle action on $(B,\omega)$, we also have the
action given by the Reeb flow along the fibres. We have already seen that
$L_{\tX}\alpha=0$; the condition $L_R\alpha=0$ is immediate from the
defining equations of the Reeb vector field.

\begin{lem}
The flows of $\tX=\tilde{H}R+\Xh$ and $R$ on $M$ commute and hence
define a torus action preserving the contact form~$\alpha$.
\end{lem}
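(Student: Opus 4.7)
The plan is to show that $[R,\tilde{X}]=0$; commutativity of the flows then follows, and since $\tilde{X}$ generates an $S^1$-action by the preceding proposition and $R$ generates the Reeb $S^1$-action, these assemble into a $T^2$-action on~$M$. The invariance of $\alpha$ under this torus action is automatic once we know each individual flow preserves~$\alpha$, and both facts $L_{\tilde{X}}\alpha=0$ and $L_R\alpha=0$ have already been recorded.

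The computation of $[R,\tilde{X}]$ reduces, by bilinearity, to two pieces. First, $\tilde{H}=H\circ\pi$ is constant along the fibres of~$\pi$, which are the orbits of~$R$; hence $R(\tilde{H})=0$. This shows $L_R(\tilde{H}R)=0$, so it remains to prove $[R,\Xh]=0$.

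I would establish this last identity by showing that $[R,\Xh]$ is simultaneously vertical and horizontal. On the one hand, $\Xh$ is $\pi$-related to~$X$ and $R$ is $\pi$-related to the zero vector field on~$B$; therefore $[R,\Xh]$ is $\pi$-related to $[0,X]=0$, which means $T\pi([R,\Xh])=0$, i.e.\ $[R,\Xh]$ is vertical and so a (smooth) multiple of~$R$. On the other hand, the Cartan-style identity
\[ \alpha([R,\Xh])=R\bigl(\alpha(\Xh)\bigr)-\Xh\bigl(\alpha(R)\bigr)
-\rmd\alpha(R,\Xh) \]
vanishes because $\alpha(\Xh)=0$, $\alpha(R)=1$, and $\iota_R\rmd\alpha=0$. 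Combining the two conclusions with $\alpha(R)=1$ gives $[R,\Xh]=0$, and hence $[R,\tilde{X}]=0$.

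Since the two vector fields commute and each generates an $S^1$-action on the compact manifold~$M$, their joint flow defines a smooth $T^2$-action, and both $L_R\alpha=0$ and $L_{\tilde{X}}\alpha=0$ imply that $\alpha$ is invariant under this action. I do not expect a serious obstacle here; the only subtlety is the verification that $[R,\Xh]$ is vertical, which is a matter of correctly invoking $\pi$-relatedness rather than a naive pushforward of Lie brackets.
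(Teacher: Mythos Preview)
Your argument is correct and follows essentially the same route as the paper: reduce to $[R,\Xh]=0$ using that $\tilde{H}$ is constant along fibres, then show this bracket is both horizontal (via $\alpha([R,\Xh])=0$) and vertical (via $T\pi([R,\Xh])=0$). The only cosmetic difference is that the paper computes $\alpha([R,\Xh])$ using the Leibniz rule $\alpha(L_R\Xh)=L_R(\alpha(\Xh))-(L_R\alpha)(\Xh)$ and invokes $L_R\alpha=0$ directly, whereas you use the equivalent formula coming from the exterior derivative and the defining properties $\alpha(R)=1$, $\iota_R\rmd\alpha=0$.
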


\begin{proof}
We show that $R$ commutes with the horizontal lift
$\Xh$ of~$X$. Since $\tilde{H}$ is constant along the fibres
of the Boothby--Wang bundle, i.e.\ the $R$-orbits, the Reeb vector field
then also commutes with~$\tX$. From
\[ \alpha([R,\Xh])=\alpha(L_R\Xh)=
L_R(\alpha(\Xh))-(L_R\alpha)(\Xh)=0\]
we see that $[R,\Xh]$ is horizontal. But also
\[ T\pi([R,\Xh])=[T\pi(R),T\pi(\Xh)]=0,\]
which means that the Lie bracket $[R,\Xh]$ vanishes.
\end{proof}

Provided the $S^1$-action on $B$ has finitely many fixed points,
we can find a Reeb flow on $M$ with finitely many periodic
orbits by choosing a line of irrational slope in the torus acting on~$M$,
analogous to the argument in Section~\ref{subsection:deformation}.

\begin{prop}
If the Hamiltonian $S^1$-action on $(B,\omega)$ has finitely
many fixed points, a suitably rescaled contact form on the
Boothby--Wang bundle over $(B,\omega)$ has finitely many periodic
Reeb orbits, namely, the fibres over the fixed points.
\end{prop}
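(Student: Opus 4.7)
The plan is to exploit the commuting $T^2$-action on $M$ generated by $R$ and $\tX$ from the preceding lemma, and to single out a Reeb vector field lying along a line of irrational slope in the Lie algebra of~$T^2$. Concretely, after normalising $H>0$ on $B$ (so that $\tilde H>0$ on~$M$), I would pick a positive irrational number $\varepsilon$ and consider
\[ Y:=\varepsilon R+\tX=(\varepsilon+\tilde H)R+\Xh.\]
Applying the earlier lemma on lifting Hamiltonian vector fields to the shifted Hamiltonian $H+\varepsilon$ (whose Hamiltonian vector field is still $X$), one recognises $Y$ as the Reeb vector field of the rescaled contact form $\alpha/(\varepsilon+\tilde H)$.

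The periodic orbits of $Y$ would then be analysed through the $T^2$-action. Since $R$ and $\tX$ commute and each generates an $S^1$-action of period~$2\pi$, the time-$t$ flow of $Y$ equals $\phi^{R}_{\varepsilon t}\circ\phi^{\tX}_{t}$. Over a fixed point $p$ of $X$, the horizontal lift $\Xh$ vanishes along $\pi^{-1}(p)$, so $Y=(\varepsilon+H(p))R$ there and the fibre is a periodic orbit of period $2\pi/(\varepsilon+H(p))$. Over a point $p$ with $X(p)\neq 0$, the vectors $R$ (vertical) and $\tX$ (with non-zero horizontal component) are linearly independent at every $q\in\pi^{-1}(p)$, so the $T^2$-orbit of $q$ is two-dimensional and the stabiliser $\Gamma_q\subset T^2$ is finite.

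An orbit of $Y$ through such a $q$ would be periodic of period $T>0$ precisely when $(\varepsilon T,T)$ represents an element of $\Gamma_q$ in $T^2=\R^2/(2\pi\Z)^2$, equivalently lies in the preimage lattice $\Lambda_q\subset\R^2$. Since $\Lambda_q$ contains $2\pi\Z^2$ with finite index, every element of $\Lambda_q$ has coordinates in $2\pi\mathbb{Q}$; an element $(a,b)\in\Lambda_q$ with $b>0$ would force $\varepsilon=a/b\in\mathbb{Q}$, contradicting the choice of $\varepsilon$. Hence the only periodic Reeb orbits of $\alpha/(\varepsilon+\tilde H)$ are the fibres over the fixed points of the Hamiltonian $S^1$-action, and these are finite in number by hypothesis.

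The step I expect to be the main obstacle is the isotropy dichotomy: at a fibre over a fixed point of $X$, the $T^2$-stabiliser is one-dimensional (the graph of a homomorphism $S^1\to S^1$ determined by $H(p)\in\Z$), producing the desired periodic fibre orbit; at a fibre over a non-fixed point, it is zero-dimensional. Both assertions follow from the vertical/horizontal splitting at~$q$, but they are what permit a single generic irrational $\varepsilon$ to rule out all further periodic orbits simultaneously, rather than forcing an orbit-by-orbit genericity argument.
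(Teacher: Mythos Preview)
Your proposal is correct and takes essentially the same approach as the paper: form $\tX+\varepsilon R$ with $\varepsilon$ irrational, identify it as the Reeb vector field of $\alpha/(\tilde H+\varepsilon)$, and conclude via the irrational-slope argument in the $T^2$-action. The paper's proof is a terse two-line statement that invokes this irrational-slope reasoning by analogy with Section~\ref{subsection:deformation}; you have simply spelled out that step in full, including the lattice argument showing that a periodic orbit over a non-fixed point would force $\varepsilon\in\mathbb{Q}$.
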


\begin{proof}
For $\varepsilon>0$ an irrational number, the flow of
$\tX_{\varepsilon}:=\tX+\varepsilon R$ preserves $\alpha$, and its only
periodic orbits are the fibres over the fixed points. So the desired
contact form is $\alpha/(\tilde{H}+\varepsilon)$.
\end{proof}
\subsection{Hamiltonian $S^1$-actions with finitely many fixed points}
On the complex projective space $\CP^n$, equipped with the
Fubini--Study symplectic form for which $\CP^1\subset\CP^n$
has area~$\pi$, the function
\[ H\bigl([z_0:\ldots:z_n]\bigr)=
\frac{1}{2}\,\frac{w_1|z_1|^2+\cdots+w_n|z_n|^2}{|z_0|^2+\cdots+|z_n|^2},\]
where $w_1,\ldots,w_n$ are integers, is the Hamiltonian for the
$S^1$-action
\[ \rme^{\rmi\theta}[z_0:\ldots:z_n]=
[z_0:\rme^{\rmi w_1\theta}z_1:\ldots:\rme^{\rmi w_n\theta}z_n].\]
When the integers $w_1,\ldots,w_n$ are pairwise distinct and non-zero,
this $S^1$-action has the $n+1$ isolated fixed points
\[ [1:0:\ldots:0],\,\ldots,\,[0:\ldots:0:1].\]
For the action to be effective, we have to assume that the
greatest common divisor of $w_1,\ldots,w_n$ is~$1$.

A Hamiltonian $S^1$-action extends to the blow-up at
any isolated fixed point, see~\cite[Section~IV.3.2]{audi91},
\cite[Section~3]{kake07}. Such a blow-up replaces the one fixed point
by $n$ new fixed points. By iterating this procedure, we can
realise a Hamiltonian $S^1$-action with $n+1+a(n-1)$ fixed points
on the blown-up manifold $\CP^n\#_a\overline{\CP}^n$
for any non-negative integer~$a$.

\begin{rem}
A blow-up of a symplectic manifold $(B,\omega)$ is effected by removing an
open standard ball in a Darboux chart of~$(B,\omega)$, and collapsing the
characteristic line field on the boundary along which the symplectic
form degenerates. This line field defines the Hopf fibration
$S^{2n-1}\rightarrow\CP^{n-1}$, and the collapse produces the
exceptional divisor $\CP^{n-1}$. The size of the chosen ball
(in terms of the symplectic volume form $\omega^n$) determines
the cohomology class of the symplectic form on the blown-up
manifold. By an appropriate choice of the blow-ups we create a symplectic
form $\omega_a$ on $\CP^n\#_a\overline{\CP}^n$ such that
$[\omega_a/2\pi]$ is a rational cohomology class, and by rescaling we
may assume that this class is integral.
\end{rem}
\subsection{Laudenbach's surgery construction}
Laudenbach~\cite{laud97} describes a surgery construction
on hypersurfaces in symplectic
manifolds that allows one to control the Hamiltonian flow
after the surgery. Inside a single Darboux chart, where the
Hamiltonian flow is linear, one performs four simultaneous
embedded surgeries of index~$1$ on the hypersurface.
If the hypersurface has dimension $2n+1$, the belt sphere
of an index~$1$ surgery is $2n$-dimensional; each surgery replaces
two copies of a ball by a cylinder $S^{2n}\times[-1,1]$.
There will be new periodic orbits (i.e.\ closed
characteristics) inside the $(2n-1)$-dimensional
`equatorial' sphere of that belt sphere. In an explicit model
for that surgery, this equatorial sphere can be realised as
an irrational ellipsoid, in which case it contains precisely
$n$ closed characteristics.

There are Hamiltonian orbits that enter and exit the
cylinder $S^{2n}\times[-1,1]$,
and orbits that are trapped inside the cylinder by becoming
asymptotic to the equatorial ellipsoid. If one performs
only a single surgery, one loses control over the orbits that pass
through the cylinder. By performing four such surgeries, arranged
in a clever symmetric fashion, Laudenbach ensures that an orbit inside
the Darboux chart that enters one of the four cylinders
either becomes trapped or does in fact
pass through all four of them and exits as if the surgery cylinders had not
been there. This is a typical feature of plug constructions,
see~\cite{grz16}.

\begin{prop}
On a given $(2n+1)$-dimensional hypersurface in a
symplectic manifold with
isolated closed characteristics, one can perform a Laudenbach surgery
such that the number of closed characteristics on the
new hypersurface has increased by $4n$ or $4n-1$.
\end{prop}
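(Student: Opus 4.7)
The plan is to perform Laudenbach's surgery inside a carefully chosen Darboux chart on $M$ and then count the closed characteristics of the new hypersurface by appealing to the plug property of the construction. First I would pick a point $p\in M$ and a Darboux chart $(U,\omega_{\mathrm{st}})$ around~$p$; after an auxiliary symplectic change of coordinates one may further assume that the characteristic foliation of $M\cap U$ is a translation-invariant linear flow, which is the model in which Laudenbach's surgery is defined. The chart $U$ is then shrunk so that it meets at most one of the finitely many isolated closed characteristics of~$M$.

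Next I would carry out Laudenbach's four simultaneous embedded index-$1$ surgeries inside $U$, each replacing two $(2n+1)$-balls in $M$ by a cylinder $S^{2n}\times[-1,1]$. Since the hypersurface is untouched outside~$U$, every pre-existing closed characteristic avoiding $U$ persists in the new hypersurface~$M'$. By a further adjustment of the local model, each of the four equatorial $(2n-1)$-spheres inside these cylinders can be identified symplectically with an irrational ellipsoid
\[
\sum_{i=1}^{n} a_i(x_i^2+y_i^2)=1,
\]
with $a_1,\dots,a_n$ rationally independent; such an ellipsoid carries exactly $n$ closed characteristics, namely its intersections with the coordinate complex lines. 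This produces a total of $4n$ genuinely new closed orbits in~$M'$.

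The remaining issue is the fate of characteristics entering $U$ from outside. Here the plug property of the four-surgery configuration, which is the heart of Laudenbach's construction, asserts that every such characteristic either passes through all four cylinders and re-emerges along its original linear trajectory, or becomes forward- or backward-asymptotic to one of the four equatorial ellipsoids. Consequently, if $U$ is chosen disjoint from every closed characteristic of~$M$, then no pre-existing closed orbit is lost and the net change in the count is exactly~$4n$. If instead $U$ is positioned so that a single pre-existing closed characteristic runs through it and the configuration is aligned so as to trap that orbit rather than let it pass, then one old orbit is destroyed while $4n$ new ones are born, giving a net increase of~$4n-1$. Both placements are available because $M$ has only isolated closed characteristics, leaving ample freedom in the choice of~$U$.

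The principal obstacle in this plan is really the technical core of Laudenbach's paper: the explicit verification of the pass-through/trap dichotomy for the four-cylinder configuration, and the symplectic identification of each equatorial belt sphere with an irrational ellipsoid carrying precisely $n$ closed characteristics. I would quote these facts directly from \cite{laud97} rather than reprove them, so that the proof of the proposition reduces to the chart-placement argument and the orbit count described above.
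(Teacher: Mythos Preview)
Your proposal is correct and follows essentially the same approach as the paper's proof: choose a Darboux chart either disjoint from all closed characteristics (net gain $4n$) or traversed by exactly one, which is then trapped by the surgery configuration (net gain $4n-1$), with the $4n$ new orbits coming from the four irrational equatorial ellipsoids and the plug property of Laudenbach's four-cylinder arrangement handling all transiting orbits. The paper's proof is terser but identical in substance, relying on the same facts about Laudenbach's construction that you propose to quote from~\cite{laud97}.
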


\begin{proof}
If one performs the Laudenbach construction inside a Darboux chart
not traversed by any closed characteristic, one creates a total of $4n$
additional closed characteristics inside the four surgery cylinders.

One may also choose a Darboux chart traversed by exactly one
closed characteristic. In this case one can place the surgery
cylinders in such a way that this characteristic becomes trapped
inside the first cylinder it enters. So we still get $4n$ additional
closed characteristics, but one of the existing ones has been destroyed.
\end{proof}
\subsection{Proof of Theorem~\ref{thm:finite}}
We now put the results of the preceding sections together.
First we find a contact form on a Boothby--Wang bundle over
$\CP^n\#_a\overline{\CP}^n$ with precisely $n+1+a(n-1)$ periodic Reeb
orbits.
In dimension five ($n=2$), this suffices to realise all numbers
$k\geq 3=2+1$ as the number of periodic Reeb orbits on some contact
manifold.
This proves part (a) of the theorem.

In order to prove part~(b), we start from an irrational
ellipsoid in $\R^{2n+2}$ with $n+1$ closed characteristics, and we
modify this number by performing suitable
Laudenbach surgeries on the contact manifold.
This allows us to create a connected hypersurface in
$(\R^{2n+2},\omega_{\mathrm{st}})$ with
\[ n+1+b\cdot(4n-1)+c\cdot 4n\]
closed characteristics for any non-negative integers $b,c$.

By choosing $(b,c)$ in the range
\[ (4n-2,0),\, (4n-3, 1),\,\ldots,\, (1,4n-3),\, (0,4n-2)\]
we can realise $4n-1$ successive integers.
This implies that starting from
\[ n+1+(4n-2)\cdot(4n-1)=16n^2-11n+3\]
we can realise all integers. With the help of
a Hamiltonian plug~\cite{grz16}, one can destroy any isolated
closed characteristic in dimension $2n+1\geq 5$,
so all smaller non-negative integers can likewise
be realised.

In dimension $2n+1=3$, the Hamiltonian
plug described in \cite{grz16} contains two periodic
orbits; in other words, one can always destroy an isolated characteristic
and create two new ones in the process. This gives us any
number $k\geq n+1=2$.

\appendix

\section{Some quaternionic calculations}
\label{appendix:quaternionic}
We write $\langle\,.\,,\,.\,\rangle$ for the standard inner
product on $\R^4=\HH$, and $\Real(\bfa)$ for the real part $a_0$
of a quaternion $\bfa=a_0+a_1\bfi+a_2\bfj+a_3\bfk$.
The conjugate of $\bfa$ is $\overline{\bfa}=a_0-a_1\bfi-a_2\bfj-a_3\bfk$.

We identify $S^3$ with the unit sphere in $\HH$, and $\R^3$ with
the space of pure imaginary quaternions.

In the following calculations we shall freely use a few basic rules
that are easy to verify (or see~\cite[Section~10.4]{geig16}). These are:
\begin{enumerate}
\item[(i)] $\overline{\bfa\cdot\bfb}=\overline{\bfb}\cdot\overline{\bfa}$;
\item[(ii)] $\langle\bfa,\bfb\rangle=\Real(\bfa\cdot\overline{\bfb})$;
\item[(iii)] $\Real(\bfa\cdot\bfb)=
\Real(\bfb\cdot\bfa)$;
\item[(iv)] $\langle\bfx,\bfy\rangle=
\langle\bfu\bfx\obfu,\bfu\bfy\obfu\rangle$
for $\bfx,\bfy\in\R^3$ and $\bfu\in S^3$.
\end{enumerate}
From (ii) it follows that right-multiplication by a unit quaternion defines
an element of $\SO(4)$; with (iii) the same is true for left-multiplication.

\begin{proof}[Proof of Lemma~\ref{lem:pullback}]
We first compute $\alpha_{\bfj}$ in quaternionic notation.
This gives
\begin{eqnarray*}
\bigl(\alpha_{\bfj}\bigr)_{\bfu}(\dot{\bfu})
 & = & -2\bigl(\rmd r\circ\bfj\bigr)_{\bfu}(\dot{\bfu})\\
 & = & -2\langle\bfu,\bfj\dot{\bfu}\rangle\\
 & = & 2\Real(\bfu\dot{\obfu}\bfj)\\
 & = & 2\Real(\bfj\bfu\dot{\obfu}).
\end{eqnarray*}
For $\alpha_{\bfi}$ and $\alpha_{\bfk}$ we have the analogous
expression.

The differential of $\Phi$ is given by
\[ T_{\bfu}\Phi(\dot{\bfu})=(\dot{\obfu}\bfi\bfu+\obfu\bfi\dot{\bfu},
\dot{\obfu}\bfk\bfu+\obfu\bfk\dot{\bfu}).\]
We then compute
\begin{eqnarray*}
\bigl(\Phi^*\lambda_1\bigr)_{\bfu}(\dot{\bfu})
 & = & (\lambda_1)_{\Phi(\bfu)}(\dot{\obfu}\bfi\bfu+\obfu\bfi\dot{\bfu},
       \dot{\obfu}\bfk\bfu+\obfu\bfk\dot{\bfu})\\
 & = & \langle\obfu\bfk\bfu,\dot{\obfu}\bfi\bfu+\obfu\bfi\dot{\bfu}\rangle\\
 & = & \langle\bfk,\bfu\dot{\obfu}\bfi+\bfi\dot{\bfu}\obfu\rangle\\
 & = & \Real(-\bfj\dot{\bfu}\obfu-\bfk\bfu\dot{\obfu}\bfi)\\
 & = & \Real(-\bfj\dot{\bfu}\obfu+\bfj\bfu\dot{\obfu})\\
 & = & 2\Real(\bfj\bfu\dot{\obfu}),
\end{eqnarray*}
where we have used that
the condition $|\bfu|=1$ gives $\dot{\bfu}\obfu+\bfu\dot{\obfu}=0$.
So we have shown that $\Phi^*\lambda_1=\alpha_{\bfj}$.

For the verification of $\Phi^*\lambda_2=\alpha_{\bfk}$
we start from the observation that the
standard complex structure of $S^2$ acts on $T_{\bfx}S^2$ by
$\dot{\bfx}\mapsto \bfx\times\dot{\bfx}$. This gives
\[ \bigl(\Phi^*\lambda_2\bigr)_{\bfu}(\dot{\bfu})=
\langle\obfu\bfi\bfu\times\obfu\bfk\bfu,
\dot{\obfu}\bfi\bfu+\obfu\bfi\dot{\bfu}\rangle=
-\langle\obfu\bfj\bfu,\dot{\obfu}\bfi\bfu+\obfu\bfi\dot{\bfu}\rangle.\]
The further computation is similar.
\end{proof}

\begin{ack}
H.~G.\ would like to take this opportunity to thank Jes\'us Gonzalo
for the long-term collaboration on all aspects of contact circles
and spheres. The influence of that project on the present work is
plainly evident. We also thank Silvia Sabatini for useful conversations
and references about Hamiltonian actions, and Janko Latschev
for spotting an incomplete argument in an earlier version of this paper.
\end{ack}

\end{document}